\theoremstyle{definition}
\newtheorem{lemma}{Lemma}[section]
\newtheorem{proposition}[lemma]{Proposition}
\newtheorem{theorem}[lemma]{Theorem}
\newtheorem{corollary}[lemma]{Corollary}
\newtheorem{remark}[lemma]{Remark}
\newtheorem{definition}[lemma]{Definition}
\newtheorem{question}[lemma]{Question}
\newcommand{\prop}[1]{\begin{proposition}\label{#1}
\sl }
\newcommand{\eprop}{\end{proposition}}
\newcommand{\thm}[1]{\begin{theorem}\label{#1}
\sl }
\newcommand{\ethm}{\end{theorem}}
\newcommand{\lem}[1]{\begin{lemma}\label{#1}
\sl }
\newcommand{\elem}{\end{lemma}}
\newcommand{\defin}[1]{\begin{definition}\label{#1}
\sl }
\newcommand{\edefin}{\end{definition}}
\newcommand{\beqno}{\begin{eqnarray*}}
\newcommand{\eeqno}{\end{eqnarray*}}
\newcommand{\beqla}[1] {\begin {eqnarray}\label{#1}}
\def\eeq {\end {eqnarray}}
\newcommand{\beq}{\begin {eqnarray}}
\newcommand{\real}{{\mathbb R}}
\newcommand{\integer}{{\mathbb Z}}
\newcommand{\nanu}{{\mathbb N}}
\newcommand{\complex}{{\mathbb C}}
\newcommand{\smooth}{C^\infty}
\newcommand{\distr}{{\mathcal D}'}
\newcommand{\sdistr}{{\mathcal S}'}
\newcommand{\dz}{\partial_z}
\newcommand{\dzb}{\partial_{\overline{z}}}
\newcommand{\torus}{{\mathbb T}}
\newcommand{\D}{\mathbb{D}}
\newcommand{\E}{\mathbb{E}}
\newcommand{\C}{\mathbb{C}}
\newcommand{\R}{\mathbb{R}}
\newcommand{\IH}{\mathbb{H}}
\newcommand{\IZ}{\mathbb{Z}}
\newcommand{\Chat}{\widehat{\mathbb{C}}}
\newcommand{\prob}{{\mathbb P}}
\newcommand{\expec}{{{\mathbb E}\,}}
\newcommand{\diam}{{{\rm diam}\,}}
       \def\CB{{\mathcal B}}       
\def\CD{{\mathcal D}}              
              \def\CI{{\mathcal I}}
\def\CJ{{\mathcal J}}              
\def\CM{{\mathcal M}}
\def\qq{ \begin{eqnarray} }
\def\qqq{ \end{eqnarray} }
\def\rr{ \begin{equation} }
\def\rrr{ \end{equation} }
\def\non{ \nonumber }
\newcommand{\NJ}{{{\bf J}}}
\def\qq{ \begin{eqnarray} }
\def\qqq{ \end{eqnarray} }
\def\non{ \nonumber }
\newcommand{\no}{\noindent}
\newcommand{\vs}{\vspace}
\newcommand{\hf}{{_1\over^2}}
\newcommand{\BbbN}{\mathbb{N}}
\newcommand{\BbbR}{\mathbb{R}}
\newcommand{\BbbC}{\mathbb{C}}
\newcommand{\BbbH}{\mathbb{H}}
\newcommand{\BbbD}{\mathbb{D}}%
\newcommand{\calF}{{\mathcal F}}
\newlength{\figheight}
\newcommand{\KHI}{\raisebox{0.07cm}{$\chi$}}
        \def\halmos{{\ \vbox{\hrule\hbox{\vrule
height1.3ex\hskip0.8ex\vrule}\hrule}}
            \par\medskip}
        \newcommand{\refeq}[1]{(\ref{#1})}
\begin{document}
\title[Random Conformal Weldings]
{Random Conformal Weldings}

\author[K. Astala]{Kari Astala$^1$}
\address{University of Helsinki, Department of Mathematics and Statistics,
         P.O. Box 68 , FIN-00014 University of Helsinki, Finland}
\email{kari.astala@helsinki.fi}


\author[P. Jones]{Peter Jones}
\address{Department of Mathematics,
Yale University, 10 Hillhouse Ave, New Haven, CT, 06510, U.S.A.}
\email{jones@math.yale.edu}

\author[A. Kupiainen]{Antti Kupiainen$^1$}
\address{University of Helsinki, Department of Mathematics and Statistics,
         P.O. Box 68 , FIN-00014 University of Helsinki, Finland}
\email{antti.kupiainen@helsinki.fi}

\author[E. Saksman]{Eero Saksman$^1$}
\address{University of Helsinki, Department of Mathematics and Statistics,
         P.O. Box 68 , FIN-00014 University of Helsinki, Finland}
\email{eero.saksman@helsinki.fi}
\footnotetext[1]{Supported by the Academy of Finland}
\keywords{Random welding, quasi-conformal maps, SLE}


\begin{abstract}
We construct a conformally invariant random family of closed curves in the plane by
 welding of random homeomorphisms of the unit circle.
The   homeomorphism is constructed using the exponential of $\beta X$
where $X$ is the restriction of the two dimensional  free field on the circle  
and the parameter $\beta$ is in the "high temperature" regime  $\beta<\sqrt 2$.
The welding problem is solved by studying a  non-uniformly elliptic Beltrami equation with a random
complex dilatation. For the existence a method of  Lehto is used. This requires sharp probabilistic estimates to control conformal moduli of annuli and they are proven
by decomposing  the free field as a sum of independent fixed scale fields
and controlling the correlations of the complex dilation restricted to dyadic
cells of various scales. For uniqueness we invoke a result by Jones and Smirnov
on conformal  removability of H\"older curves. We conjecture that
our curves are locally related to SLE$(\kappa)$ for $\kappa<4$.
\end{abstract}

\maketitle


\section{Introduction}
\label{se:intro} 

There has been great interest in conformally
invariant random curves and fractals in the plane ever since it was
realized that such geometric objects appear naturally in statistical
mechanics models at the critical temperature \cite{Ca}. A major
breakthrough in the field occurred when O. Schramm \cite{Schra1}
introduced the Schramm-Loewner Evolution (SLE), a stochastic process
whose sample paths are conjectured (and in several cases proved) to
be the curves occurring in the physical models. We refer to
\cite{Schra2} and \cite{Smi} for a general overview and some recent
work on SLE. SLE curves come in two varieties: the radial one where
the curve joins a boundary point (say of the disc) to an interior
point and the chordal case where two boundary points are joined.

SLE describes a curve growing in time: the original curve of interest (say a cluster boundary in
a spin system) is obtained as time tends to infinity. In this paper we give a different construction
of random curves which is stationary i.e. the probability measure on curves is directly defined without
introducing an auxiliary time. We carry out this construction for closed curves, a case that is
not naturally covered by SLE.

Our construction is based on the idea of conformal welding. Consider a
Jordan curve $\gamma$ bounding a
simply connected region $\Omega$ in the plane. By the Riemann
mapping theorem there are conformal maps $f_\pm$ mapping the
 unit disc $\BbbD$  and its complement to $\Omega$
and its complement. The map $f_+^{-1}\circ f_-$ extends continuously to the
boundary $\torus = \partial \D$ of the disc and defines  a homeomorphism of the
circle. Conformal Welding is the inverse operation where, given a
suitable homeomorphism of the circle, one constructs a Jordan curve
on the plane (see Section 2). In fact, in our case the curve is determined up to a
 M\"obius transformation of the plane.
Thus random curves (modulo M\"obius transformations) can be obtained
from random homeomorphisms via welding.

In this paper we introduce a  random scale invariant set of
homeomorphisms $h_\omega: \torus\to \torus$ and construct the welding
curves.  The model considered  here has been proposed by the second
author. The construction depends
 on a real parameter $\beta$ ("inverse temperature") and the maps are a.s. in $\omega$
 H\"older continuous for $\beta<\beta_c$. For this range of $\beta$
  the welding map will be a.s.  well-defined. For$\beta>\beta_c$ we expect the
  map  $h_\omega$ not to be continuous and no welding to exist.
  We conjecture that the
  resulting curves should locally "look like" SLE$(\kappa)$ for $\kappa<4$
  but we don't have good arguments for this.
  The case $\beta=\beta_c$,
  presumably corresponding to  SLE$(4)$, is not covered by our analysis.

 Since we are interested in random curves that are self similar it is natural to
 take $h$ with such properties. Our choice for $h$ is constructed by starting with the
 Gaussian random field $X$  on the circle (see Section 3 for precise definitions)
 with covariance
\begin{equation}
\E\, X(z)X(z')=-\log |z-z'|
\label{co1}
\end{equation}
where $z,z'\in \BbbC$ with modulus one.  $X$ is just the
restriction of the 2d massless free field (Gaussian Free Field) on
the circle. The exponential of $\beta X$ gives rise to a random measure $\tau$ on the unit circle $\torus$,
formally given by
\begin{equation}
"d\tau= e^{\beta X(z)}dz".
\label{f1}
\end{equation}
The proper definition 
involves a limiting process $\tau (dz)=\lim_{\varepsilon\to 0}
e^{\beta X_\varepsilon (z)}/\expec e^{\beta X_\varepsilon (z)} dz$, where $X_\varepsilon$ stands for a suitable regularization
of $X,$ see Section \ref{subse:exponential} below.

Identifying the circle as $\torus =\real /\integer =[0,1)$ our 
 random homeomorphism  $h:[0,1)\to [0,1)$ is  defined as
  \begin{equation}
h(\theta)=\tau ([0,\theta ))/\tau ([0,1) )\qquad \mbox{for}\;\; \theta\in [0,1).
\label{h1}
\end{equation}

\medskip

\noindent  The main result of this paper can then be summarized as follows: 
\smallskip

\noindent {\it For
$\beta^2< 2$ and almost surely in $\omega$, the formula (\ref{h1})
defines a H\"older continuous 
circle homeomorphism, such that  the welding
problem has a  solution $\gamma$, where  $\gamma$ is a  Jordan curve  bounding a domain $\Omega = f_+(\BbbD)$ with
a  H\"older continuous Riemann mapping $f_+$. For a given $\omega$, the solution is unique up to a M\"obius map of the plane.} 
\smallskip

\noindent We refer to Section
 \ref{se:conclusion} (Theorem \ref{main result}) for the exact statement
 of the main result.
 \medskip
 
 Apart from connection to SLE the weldings constructed in this paper should be of interest to complex analysts as they form a natural family that degenerates as $\beta\uparrow\sqrt{2}.$ It would be of great interest to understand the critical case $\beta=\sqrt{2}$ as well as the low
 temperature "spin glass phase" $\beta>\sqrt{2}.$ It would also be of interest to understand the connection of our weldings to those arising from stochastic flows \cite{AiMaTha}. In \cite{AiMaTha}  H\"older continuous homeomorphisms are considered, but the boundary behaviour
 of the welding and hence its existence and uniqueness are left open.

In writing the paper we have tried be generous in providing details on both  the function theoretic and the stochastics notions and tools needed, in order to serve readers with varied backgrounds.
The structure of the paper is as follows. Section
\ref{se:welding} contains background
material on conformal welding and the geometric analysis tools we
need later on. To be more specific, Section \ref{se:welding}
recalls the notion of conformal welding and explains  how the
welding problem is reduced to the study of the Beltrami equation.
Also we recall  a useful method due to Lehto \cite{Le} to prove the
existence of  a solution for a class of non-uniformly elliptic
Beltrami equations, and a theorem by Jones and Smirnov \cite{JoSmi}
that will be used to verify  the uniqueness of our welding. Finally we recall
 the Beurling-Ahlfors
extension of homeomorphisms of the circle to the inside the disc.
For our purposes we need to estimate carefully the dependence of the
dilatation of the extension in a Whitney cube by just using small
amount of information of the homeomorphism on a 'shadow' of the
cube.

 In Section \ref{se:circlehomeo} we introduce the one-dimensional
trace of the Gaussian free field and recall some known properties of
its exponential  that we will use to define and study the random
circle homeomorphism.
Section \ref{se:probability} is the technical core of the paper
 as it contains the main probabilistic estimates we need
to control the random dilatation of the extension map. Finally,  in Section
\ref{se:conclusion}  things are put together and the
a.s. existence and uniqueness of the welding map is proven.

Let us finish by a remark on notation. We denote by $c$ and $C$
generic constants which may vary between estimates. When 
the constants depend on parameters such as $\beta$ we denote
this by $C(\beta)$.

\vskip 3mm

\no {\bf Acknowledgements}. We thank M. Bauer, D. Bernard,  Steffen Rohde and Stanislav Smirnov
for useful discussions. This work is partially funded by the Academy of Finland,
European Research Council and National Science Foundation.

\section{Conformal Welding}
\label{se:welding}

In the present section we recall for the general readers benefit
basic notions and results from geometric analysis that are needed in
our work. In particular, we recall the notion of conformal welding,
Lehto's method for solving the Beltrami-equations, the uniqueness
result for weldings due to Jones and Smirnov, and the last
subsection contains estimates for the Beurling-Ahlfors extension
tailored for our needs.

\subsection{Welding and Beltrami equation}

One of the main methods for constructing conformally invariant
families of (Jordan) curves comes from the theory of {\it conformal
welding}. Put briefly, in this method we glue the unit disk $\D =\{z
\in \C : |z| < 1\}$ and the exterior disk $\D_\infty = \{z \in \Chat
: |z| > 1\}$ along a homeomorphism $\phi: \torus\to \torus$, by the
identification
$$x \sim y, \quad\;  \mbox{ when } x \in \torus = \partial \D, \;\mbox{ } y = \phi(x) \in \torus = \partial \D_\infty.
$$
The problem of welding is to give a natural complex structure to
this topological sphere. Uniformizing  the
complex structure then gives us the curve, the image of the unit
circle.

More concretely,  given a Jordan curve  $\Gamma
\subset \Chat$, let
$$ f_+ : \D \to \Omega_+ \;\; \mbox{ and } \;\; f_- : \D_\infty \to \Omega_-
$$
be  a choice of Riemann mappings onto the components of the
complement $\Chat \setminus \Gamma = \Omega_+ \cup \Omega_-$. By
Caratheodory's theorem $f_-$ and $f_+$ both extend continuously to
$\partial \D = \partial \D_\infty$, and thus \beqla{homeo11}
 \phi = \left( f_+ \right)^{-1} \circ f_-
\eeq is a homeomorphism of $\torus$. In the welding problem we are asked
to invert this process; given a homeomorphism $\phi: \torus \to \torus$ we
are to find a Jordan curve $\Gamma$ and conformal mappings $f_{\pm}$
onto the complementary domains $\Omega_{\pm}$ so that
(\ref{homeo11}) holds. 



It is clear that the welding problem, when solvable, has natural
conformal invariance attached to it; any image of the curve $\Gamma$
under a M\"obius transformation of $\Chat$ is equally a welding
curve. Similarly, if $\phi: \torus \to \torus$ admits a welding, then so do
all its compositions with M\"obius transformations of the disk.
Note, however, that not all circle homeomorphisms admit a welding,
for examples see \cite{Oi} and \cite{Va}.

\bigskip

The most powerful tool in solving the welding problem is given by
the Beltrami differential equation, defined in a domain $\Omega$ by
\beqla{homeo12} \frac{\partial f}{\partial \overline z} = \mu(z)
\frac{\partial f}{\partial  z}, \quad \quad \mbox{ for almost every
} z \in \Omega, \eeq where we look  for homeomorphic solutions $f
\in W^{1,1}_{loc}(\Omega)$. Here (\ref{homeo12}) is an elliptic
system whenever $|\mu(z)| < 1$ almost everywhere, and uniformly
elliptic if there is a constant  $0 \leq k< 1$  such that $\| \mu
\|_\infty \leq k $.

In the uniformly elliptic case,  homeomorphic solutions to
(\ref{homeo12}) exist for every coefficient with  $\| \mu \|_\infty
 < 1$, and  they are unique up to post-composing with a
conformal mapping \cite[p.179]{AsIwMa}. In fact, it is this
uniqueness property that gives us a way to produce the welding.
To see this
suppose first that \beqla{homeo13}
 \phi = f|_{\torus},
\eeq where $f\in W^{1,2}_{loc}(\D;\D)\cap C(\overline{\D})$ is a
homeomorphic solution to (\ref{homeo12}) in the disc $\D$. Find then
a homeomorphic solution to the auxiliary equation \beqla{homeo14}
\frac{\partial F}{\partial \overline z} =
\KHI_{\D}(z) \, \mu(z) \; \frac{\partial
F}{\partial  z}, \quad \quad \mbox{ for } a.e. \; z \in \C. \eeq Now
$\Gamma = F(\torus)$ is a Jordan curve. Moreover, as
$\partial_{\overline z} F = 0$ for $|z| > 1$, we can set $ f_- :=
F|_{\D_\infty} \mbox{ and }   \Omega_- := F(\D_\infty)$ to define a
conformal mapping
$$ f_-: \D_\infty \to \Omega_-$$
On the other hand, since both $f$ and $F$ solve the equation
(\ref{homeo12}) in the unit disk $\D$, by uniqueness of the
solutions we have \beqla{homeo31}
 F(z) = f_+ \circ f(z), \quad z \in \D,
\eeq for some conformal mapping $f_+ : \D = f(\D) \to \Omega_+ :=
F(\D)$. Finally, on the unit circle,
\beqla{homeo32}
 \hskip40pt \phi(z)  = f|_{\torus}(z) =  \left( f_+ \right)^{-1} \circ f_-(z), \quad \quad z \in \torus.
\eeq
Thus we have found a solution to the welding problem, under the
assumption (\ref{homeo13}). That the welding curve $\Gamma$ is
unique up to a  M\"obius transformation of $\C$ follows from
\cite[Theorem 5.10.1]{AsIwMa}, see also Corollary \ref{homeo42}
below.

To complete this circle of ideas we need to identify the
homeomorphisms $\phi: \torus \to \torus$ that admit the representation
(\ref{homeo12}),  (\ref{homeo13}) with uniformly elliptic $\mu$ in (\ref{homeo12}). It turns out \cite[Lemma 3.11.3
and Theorem 5.8.1]{AsIwMa} that such $\phi$ 's are precisely the
quasisymmetric mappings of $\torus$, mappings that satisfy
\beqla{homeo17}\label{qscond}
 \hskip40pt K(\phi) := \sup_{s, t \in \R} \frac{|\phi(e^{2\pi
i(s+t)}) - \phi(e^{2\pi i s}) |}{|\phi(e^{2\pi i(s-t)}) -
\phi(e^{2\pi i s})|}  < \infty. \eeq

\bigskip

\subsection{Existence in the degenerate case: the Lehto condition}

The previous subsection  describes an obvious model for constructing
random Jordan curves, by first finding  random homeomorphisms of the
circle and then solving for each of them  the associated welding
problem. In the present work, however, we are faced with the
obstruction that circle homeomorphisms with derivative the
exponentiated Gaussian free field almost surely do not satisfy the
quasisymmetry assumption (\ref{homeo17}). Thus we are forced outside
the uniformly elliptic PDE's and need to study (\ref{homeo12}) with
degenerate coefficients with only $|\mu(z)| < 1$ almost everywhere.
We are even outside the much studied class of maps of exponentially
integrable distortion, see \cite[ 20.4.]{AsIwMa} In such generality,
however, the homeomorphic solutions to (\ref{homeo12}) may fail to
exist, or the crucial uniqueness properties of (\ref{homeo12}) may
similarly fail.
\medskip

In his important work \cite{Le} Lehto gave a very  general condition
in the degenerate setting, for  the existence of homeomorphic
solutions to (\ref{homeo12}). To recall his result, assume we are
given  the complex dilatation  $\mu=\mu (z)$, and  write then
$$K(z)  = \frac{1+|\mu(z) |}{1-|\mu(z) |}, \quad z \in \Omega,
$$
for  the associated distortion function. Note that $K(z)$ is bounded
precisely when the equation  (\ref{homeo12}) is uniformly elliptic,
i.e. $\| \mu \|_{\infty}  < 1$. Thus the question is how
strongly can $K(z)$  grow for the basic properties of
(\ref{homeo12}) still to remain true. In order to state Lehto's
condition we fix some notation. For given $z\in\complex $ and radii
$0\leq r< R<\infty$ let us denote the corresponding annulus by
$$
A(z,r,R):=\{  w\in\complex : r<|w-z|<R\} .
$$
In the Lehto approach one needs to control the conformal moduli of
image annuli in a suitable way. This is done by introducing for any
annulus  $A(w,r,R)$ and for the given distortion function $K$ the
following quantity, which we call {\it the Lehto integral}:
\beqla{eq:4.7} L(z,r,R):=L_K(z,r,R):=\int_r^R  \, \frac{1 }{
\int_0^{2\pi}
 {K}\left(z +\rho e^{i\theta}\right) d\theta} \; \frac{d\rho}{\rho}
\eeq

For the following formulation of Lehto's theorem see \cite[p.
584]{AsIwMa}.
\bigskip

\begin{theorem} \label{viimet44} Suppose  $\mu$ is  measurable and  compactly supported with
 $|\mu(z)| < 1$ for almost every $z \in \C$.
Assume that the distortion function  $K(z) = (1+ |\mu(z)|)/(1-
|\mu(z)|)$ is locally integrable,
\begin{equation} \label{viimet127}
\,{K} \in L^1_{loc}(\C),
 \end{equation}
  and  that for some $R_0>0$ the Lehto integral satisfies
 \begin{equation} \label{viimet27}  L_K(z,0,R_0) = 
 \infty,  \quad \mbox{for all}\quad z \in \C.
 \end{equation}
Then the Beltrami equation
\begin{equation} \label{viimet34}
  \frac{\partial f}{\partial \overline z}  (z) = \mu (z)  \frac{\partial f}{\partial z}   (z) \quad \mbox{ for almost every $z\in \C,$}
 \end{equation}
admits a homeomorphic $W^{1,1}_{loc}$-solution $f:\C \to \C$.
\end{theorem}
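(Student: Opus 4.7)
The plan is to obtain $f$ as a normal-family limit of uniformly elliptic approximations. For $n\geq 2$ truncate the dilatation by setting $\mu_n(z) := \mu(z)$ when $|\mu(z)|\leq 1-1/n$ and $\mu_n(z) := 0$ otherwise, so that $\|\mu_n\|_\infty <1$ and the associated distortions $K_n$ satisfy $K_n \leq K$ pointwise. Each $\mu_n$ is compactly supported and uniformly elliptic, so the measurable Riemann mapping theorem supplies a unique homeomorphism $f_n:\C\to\C$ with $\dzb f_n = \mu_n \dz f_n$ a.e., normalised hydrodynamically by $f_n(z)=z+O(1/|z|)$ at infinity.

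The key geometric input is \emph{Lehto's modulus inequality}: there is an absolute constant $c>0$ such that
\begin{equation*}
 \mathrm{mod}\bigl(f_n(A(z_0,r,R))\bigr) \;\geq\; c\, L_{K_n}(z_0,r,R)
\end{equation*}
for every $z_0\in\C$ and $0<r<R$. This is a length-area computation: estimate the length of the image circle $f_n(\{|w-z_0|=\rho\})$ by Cauchy-Schwarz against $\int_0^{2\pi}K_n(z_0+\rho e^{i\theta})\,d\theta$, apply Fubini in $\rho$, and compare against the extremal length of the radial curve family. Since $K_n\leq K$ yields $L_{K_n}\geq L_K$, hypothesis \eqref{viimet27} forces this modulus to tend to $+\infty$ as $r\to 0$, uniformly in $n$.

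This blow-up drives both equicontinuity and non-degeneracy of $\{f_n\}$. If $\diam f_n(\overline{B(z_0,r)})\geq \varepsilon$ for arbitrarily small $r$, then the ring $f_n(A(z_0,r,R_0))$ separates a continuum of spherical diameter $\geq\varepsilon$ from the $f_n$-image of $\{|w-z_0|=R_0\}$, whose diameter is controlled uniformly in $n$ by the normalisation; but a spherical annulus separating two such continua has modulus bounded above, contradicting the previous paragraph. A symmetric argument, applied to two distinct points, precludes any collapse under a subsequential limit, so Arzelà-Ascoli extracts a subsequence converging locally uniformly on $\Chat$ to a homeomorphism $f:\Chat\to\Chat$.

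It remains to verify $W^{1,1}_{loc}$-regularity and the PDE. The pointwise bound $|Df_n|^2\leq K_n J_{f_n}$ together with Cauchy-Schwarz yields, on any compact $E\subset\C$,
\begin{equation*}
 \int_E |Df_n| \;\leq\; \Bigl(\int_E K\Bigr)^{1/2}|f_n(E)|^{1/2},
\end{equation*}
bounded independently of $n$ by \eqref{viimet127} and the uniform convergence of $f_n$. Hence $f\in W^{1,1}_{loc}$ and the weak limits of $\dz f_n$, $\dzb f_n$ provide the distributional derivatives of $f$; combining these with $\mu_n\to\mu$ a.e. and the domination $|\mu_n|\leq|\mu|<1$ yields $\dzb f=\mu\,\dz f$ in the limit. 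The heart of the argument is the modulus inequality of the second paragraph: obtaining a nondegenerate homeomorphic limit, rather than a constant map or one that crushes an arc, is precisely what the Lehto integral is engineered to prevent, and this is the single place where hypothesis \eqref{viimet27} is used.
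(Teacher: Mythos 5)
Your overall strategy (uniformly elliptic truncations, Lehto's modulus inequality for image annuli, equicontinuity via blow-up of the modulus, then a normal-family limit) is the standard route to this theorem; the paper itself does not prove the statement but quotes it from Lehto via \cite[p.~584]{AsIwMa}, and its closest in-house analogue is the proof of Theorem \ref{th:4.1}, which runs along exactly these lines following \cite[Thm 20.9.4]{AsIwMa}. Up to and including the equicontinuity of $\{f_n\}$, your argument is sound.

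The genuine gap is the non-degeneracy step, which you dispose of with ``a symmetric argument, applied to two distinct points, precludes any collapse.'' The modulus inequality only bounds images of \emph{small} balls from above; it cannot prevent the image of a \emph{fixed} ball from shrinking to a point as $n\to\infty$. Concretely, if $a\neq b$ and $r$ is small, the Teichm\"uller-type estimate applied to the ring $f_n(A(a,r,|a-b|/2))$ gives only $|f_n(a)-f_n(b)|\gtrsim e^{c\,L_K(a,r,|a-b|/2)}\,\diam f_n(\overline{B(a,r)})$, which is vacuous if $\diam f_n(\overline{B(a,r)})\to 0$ with $n$; nothing in your argument excludes this collapse, so injectivity (indeed homeomorphy) of the limit is not established. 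This is precisely where the hypothesis \eqref{viimet127} must do real work: one controls the inverse maps $g_n=f_n^{-1}$, e.g. by the logarithmic modulus of continuity $|g_n(z)-g_n(w)|\leq 16\pi^2\bigl(|z|^2+|w|^2+\int K\bigr)/\log\bigl(e+|z-w|^{-1}\bigr)$ of \cite[Lemma 20.2.3]{AsIwMa}, whose bound depends only on the $L^1$-norm of $K$ over the support of $\mu$; equicontinuity of the $g_n$ then yields a continuous inverse of the limit and rules out collapse. (This is exactly how the paper argues in Theorem \ref{th:4.1}.) In your proposal the $L^1_{loc}$ assumption is invoked only for the $W^{1,1}$ bound, so the key use of it is missing. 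A smaller point: to extract weak $L^1$ limits of $\dz f_n$, $\dzb f_n$ you need equi-integrability, not just boundedness; your Cauchy--Schwarz estimate applied to arbitrary measurable subsets of a compact set does give it, since $\int_E K\to 0$ as $|E|\to 0$, but this should be said, and the passage $\mu_n\dz f_n\rightharpoonup\mu\,\dz f$ then follows by Egorov plus equi-integrability.
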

\medskip

\begin{corollary} Suppose $\phi:\torus \to \torus$  extends to a homeomorphism
$f:\C \to \C$ satisfying   (\ref{viimet127}) - (\ref{viimet34})
together with the condition
 \begin{equation} \label{viimet128}
K(z) \in L^{\infty}_{loc}(\D).
 \end{equation}
 Then  $\phi$ admits a welding: there
are a Jordan curve $\Gamma \subset \Chat$ and conformal mappings
$f_\pm$ onto the complementary domains of $\Gamma$ such that
$$ \phi(z)  =  \left( f_+ \right)^{-1} \circ f_-(z), \quad  z \in \torus.
$$
\end{corollary}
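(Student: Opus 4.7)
The plan is to adapt the welding construction outlined for the uniformly elliptic case (equations (\ref{homeo13})--(\ref{homeo32})) to the present degenerate setting, with Theorem \ref{viimet44} supplying existence for the auxiliary equation. Given the homeomorphism $f:\C \to \C$ with dilatation $\mu$ and distortion $K$, introduce the modified coefficient $\tilde\mu := \chi_\D \mu$ as in (\ref{homeo14}) and apply Lehto's theorem to $\tilde\mu$. The hypotheses transfer without loss: outside $\D$ the corresponding distortion $\tilde K$ equals $1$, so $\tilde K \in L^1_{loc}(\C)$ remains in force, and since $\tilde K \le K$ pointwise, one has $L_{\tilde K}(z, 0, R_0) \ge L_K(z, 0, R_0) = \infty$ at every $z\in\C$. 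Theorem \ref{viimet44} therefore yields a homeomorphic $W^{1,1}_{loc}$ solution $F:\C \to \C$ of $\partial_{\bar z} F = \tilde\mu \partial_z F$, which we normalize so that $F(\infty)=\infty$.

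Because $\tilde\mu$ vanishes on $\D_\infty$, the map $F$ is holomorphic, hence conformal, on $\D_\infty$, and $f_-:=F|_{\D_\infty}$ maps $\D_\infty$ conformally onto a simply connected domain $\Omega_-:=F(\D_\infty)$. Setting $\Gamma:=F(\torus)$ and $\Omega_+:=F(\D)$, the homeomorphy of $F$ forces $\Gamma$ to be a Jordan curve with complementary components $\Omega_\pm$.

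It remains to construct the interior Riemann map $f_+:\D \to \Omega_+$ realizing the factorization $F = f_+\circ f$ on $\D$. Since $|\mu|<1$ a.e.\ the Jacobian $J_f = |\partial_z f|^2(1-|\mu|^2)$ is nonnegative, so the homeomorphism $f$ is sense-preserving and $f(\D)=\D$; by the hypothesis $K \in L^{\infty}_{loc}(\D)$ the Beltrami equation $\partial_{\bar z}g = \mu\partial_z g$ is uniformly elliptic on every disk $\D_r:=\{|z|<r\}$, $r<1$, and consequently both $f|_{\D_r}$ and $F|_{\D_r}$ are quasiconformal. Applying the Stoilow factorization (cf.\ \cite[Theorem 5.10.1]{AsIwMa}) on each such $\D_r$ gives a conformal $h_r:f(\D_r)\to F(\D_r)$ with $F = h_r\circ f$ on $\D_r$; by uniqueness the $h_r$'s are restrictions of one another and thus patch to a single holomorphic map, defining a conformal $f_+:=F\circ f^{-1}:\D\to\Omega_+$. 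Taking boundary values yields $f_-|_\torus = F|_\torus = f_+\circ \phi$, i.e., $\phi = f_+^{-1}\circ f_-|_\torus$, the desired welding identity.

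The principal obstacle lies in the factorization step, since $f$ is known globally only as a $W^{1,1}_{loc}$ homeomorphism with degenerate distortion on $\torus$, and naively the chain rule for $F\circ f^{-1}$ is delicate. The assumption $K\in L^{\infty}_{loc}(\D)$ is precisely what reduces the problem inside $\D$ to the classical uniformly elliptic theory on compactly contained subdisks, where the Stoilow decomposition and the uniqueness up to holomorphic post-composition are standard, and the exhaustion then transfers the result to all of $\D$.
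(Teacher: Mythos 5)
Your proposal is correct and follows essentially the same route as the paper: apply Lehto's theorem to the truncated coefficient $\chi_\D\mu$ (noting the Lehto condition survives truncation), take $f_-=F|_{\D_\infty}$, and use the hypothesis $K\in L^\infty_{loc}(\D)$ to invoke the classical uniformly elliptic uniqueness/Stoilow factorization inside $\D$ so that $f_+=F\circ f^{-1}$ is conformal. Your exhaustion by subdisks $\D_r$ is just a slightly more explicit version of the paper's "local question" argument.
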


\begin{proof} Given the extension $f: \C \to \C$ let us again look at the auxiliary equation
\beqla{homeo114} \frac{\partial F}{\partial \overline z} =
\KHI_{\D}(z) \, \mu(z) \;  \frac{\partial
F}{\partial  z}, \quad \quad \mbox{ for } a.e. \; z \in \C. \eeq
Since Lehto's condition holds as well for the new distortion function
$$K(z) = \frac{1+| \KHI_{\D}(z) \mu(z) |}{1-|\KHI_{\D}(z) \mu(z) |},
$$
we see from Theorem \ref{viimet44} that the auxiliary equation
(\ref{homeo114}) admits a homeomorphic solution $F:\C \to \C$.
Arguing as in (\ref{homeo13}) - (\ref{homeo32}) it will be then
sufficient to show that
$$ F(z) = f_+ \circ f(z), \quad z \in \D,
$$
where $f_+$ is conformal in $\D$. But this is a local question;
every point $z \in \D$ has a neighborhood  where $K(z)$ is uniformly
bounded, by (\ref{viimet128}). In such a neighborhood  the usual
uniqueness results to solutions  of (\ref{homeo12}) apply, see
\cite[p.179]{AsIwMa}. Thus $f_+$ is holomorphic, and as a
homeomorphism it is conformal. This proves the claim.  \halmos
\end{proof}

\medskip

Consequently, in the study of random circle  homeomorphisms $\phi =
\phi_\omega$ a key step for the conformal  welding of $\phi_\omega$
will be to show that  almost surely each such mapping admits a
homeomorphic  extension to $\C$, where the distortion function
satisfies a condition such as (\ref{viimet27}).  In our setting
where derivative of  $\phi$ is given by the exponentiated trace of a
Gaussian free field,  the extension procedure is described in Section \ref{subse:extension} and the  appropriate estimates it requires are proven in Section
\ref{se:probability}.

 Actually,  in Section \ref{se:conclusion} when
proving  our main theorem we need to present a variant of Lehto's
argument where it will be enough to estimate the Lehto integral only
at a suitable countable set of points $z\in \torus$.  We also utilize
there the fact that the extension of our random circle homeomorphism
$\phi$ satisfies \refeq{viimet128}.
 In verifying the H\"older
continuity of the ensuing map we shall apply a useful estimate
(Lemma \ref{le:4.1} below) that estimates the geometric distortion
of an annulus under a quasiconformal map. 

Given a bounded
(topological) annulus $A\subset\complex$ , with $E$ the bounded
component of $\complex\setminus A,$
 we denote by $D_O(A):=\diam (A)$ the outer diameter,
  and by $D_I(A):=\diam (E)$ the inner diameter of A.

\lem{le:4.1} Let $f $ be a quasiconformal mapping on the annulus $A(w,r,R)$, 
with the distortion function $K_f.$ It then
holds that
$$
\frac{D_O(f(A(w,r,R)))}{D_I(f(A(w,r,R)))}\geq
\frac{1}{16}\exp\left(2\pi^2 L_{K_f}(w,r,R)\right).
$$
\elem
\begin{proof} Recall first that for a rigid annulus $A = A(w,r,R)$ the modulus 
$$mod(A) = 2 \pi \log \frac{R}{r}
$$
while for any topological annulus $A$,  we define its  conformal modulus by $mod(A) = mod(g(A))$ where $g$ is a conformal map of $A$ onto a rigid annulus.
Then we have   \cite[Cor. 20.9.2]{AsIwMa}  the following  basic estimate for the modulus of
the image  annulus in terms of the Lehto integrals: \beqla{eq:4.16} mod(f(A(w,r,R)))\geq 2\pi
L_{K_f}(w,r,R). \eeq  On the other hand, by combining \cite[7.38
and 7.39]{Vu} and \cite[5.68(16)]{AnVaVu} we obtain for any bounded
topological annulus $A\subset\complex$
$$
\frac{1}{16}\exp (\pi \, mod(A))\leq \frac{D_O(A)}{D_I(A)}.
$$
Put together, the desired estimate follows. \halmos
\end{proof}

\bigskip

\subsection{Uniqueness of the welding}
\label{subse:uniqueness}

An important issue of the welding is its uniqueness, that the curve
$\Gamma$ is unique up to composing with a M\"obius transformation of
$\Chat$. As the above argument indicates, this is essentially
equivalent to the uniqueness of solutions to the appropriate
Beltrami equations, up to a M\"obius transformation. However, in
general the assumptions of Theorem \ref{viimet44} alone are much too
weak to imply  this.

It fact, in our case the uniqueness of solutions to the Beltrami
equation (\ref{homeo114})
 is equivalent to the conformal removability of the curve $F(\torus)$. Recall  that a compact set $B \subset \Chat$ is {\it conformally removable} if every homeomorphism of $\Chat$ which is conformal off $B$ is  conformal in the whole sphere, hence a M\"obius transformation.

It follows easily that e.g. images of circles under quasiconformal
mappings, i.e. homeomorphisms satisfying (\ref{homeo2}) with $\| \mu
\|_\infty  < 1$,  are conformally removable,  while Jordan
curves of positive area are never conformally removable.

For general curves the removability is a deep problem; no
characterizations of conformally removable Jordan curves  is known
to this date. What saves us in the present work  is that we have
available the remarkable result of Jones and Smirnov in \cite{JoSmi}.
We will not need their result in its full generality, as the
following special case will sufficient for our purposes.

\begin{theorem} \label{homeo41} ({\rm Jones, Smirnov \cite{JoSmi}}) Let $\Omega \subset \Chat$ be a simply connected domain such that the Riemann mapping $\psi:\D \to \Omega$ is $\alpha$-H\"older continuous for some $\alpha > 0$.

Then the  boundary $\partial \Omega$ is conformally removable.
\end{theorem}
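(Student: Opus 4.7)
My plan is to reduce conformal removability of $\partial\Omega$ to a Sobolev removability statement for $\partial\Omega$, and then to verify the latter by exploiting the H\"older assumption on $\psi$ through a Whitney decomposition of the two complementary domains.

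First I would carry out the standard reduction. Suppose $h:\Chat\to\Chat$ is a homeomorphism conformal on $\Chat\setminus\partial\Omega$. Being locally holomorphic off the boundary, $h$ automatically lies in $W^{1,2}_{loc}(\Chat\setminus \partial\Omega)$. Assume for the moment that one has shown $\partial\Omega$ to be \emph{removable for continuous $W^{1,2}_{loc}$ functions}, meaning every $g\in C(\Chat)$ with $g|_{\Chat\setminus\partial\Omega}\in W^{1,2}_{loc}$ already satisfies $g\in W^{1,2}_{loc}(\Chat)$. Then $h\in W^{1,2}_{loc}(\Chat)$ with $\partial_{\bar z} h=0$ in the distributional sense on the whole sphere (the boundary part of the distribution is killed by testing against smooth functions and using the removability); by Weyl's lemma $h$ is globally holomorphic, and as a homeomorphism of $\Chat$ it must be M\"obius. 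So the whole game is to prove the Sobolev removability of $\partial\Omega$.

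Next I would install the geometric bookkeeping. Let $\{Q_j\}$ be the standard Whitney cube decomposition of $\Omega$ and let $\widetilde Q_j:=\psi^{-1}(Q_j)\subset\D$ denote the pullback. Koebe distortion yields that $\widetilde Q_j$ has fixed quasihyperbolic diameter, and
\[
\diam(Q_j)\asymp (1-|z_j|)\,|\psi'(z_j)| \asymp \dist(\psi(z_j),\partial\Omega),
\]
for any choice of $z_j\in\widetilde Q_j$. The $\alpha$-H\"older hypothesis, together with $\dist(\psi(z),\partial\Omega)\le|\psi(z)-\psi(z/|z|)|\le C(1-|z|)^\alpha$, then yields the crucial polynomial decay
\[
\diam(Q_j)\le C\,(1-|z_j|)^\alpha.
\]
The same type of estimate propagates to the Whitney cubes of $\Omega_-:=\Chat\setminus\overline\Omega$, giving a two-sided quantitative sparseness of $\partial\Omega$ inside tubular neighborhoods.

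The heart of the proof is then a capacity bound. I would construct Lipschitz cutoffs $\varphi_n$ that equal $1$ on a $2^{-n}$-neighborhood of $\partial\Omega$ and vanish outside a $2\cdot 2^{-n}$-neighborhood, interpolating linearly across Whitney cubes meeting that shell. Grouping cubes by dyadic scale and summing $\int|\nabla\varphi_n|^2\,dA$ against the bound $\diam(Q_j)\le C(1-|z_j|)^\alpha$, a Cauchy--Schwarz argument together with the exponent $\alpha>0$ forces
\[
\int_\C |\nabla\varphi_n|^2\,dA \longrightarrow 0 \quad\text{as}\quad n\to\infty.
\]
Given any admissible $g\in C(\Chat)\cap W^{1,2}_{loc}(\Chat\setminus\partial\Omega)$, the functions $(1-\varphi_n)g$ are supported away from $\partial\Omega$, and the above capacity decay shows that they converge to $g$ in $W^{1,2}_{loc}$, which is the desired Sobolev removability.

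\textbf{Main obstacle.} The delicate step is the capacity estimate. Because $\partial\Omega$ may have Hausdorff dimension strictly greater than $1$, naive covers by short arcs do not suffice; one is forced to run the argument \emph{simultaneously} on the inner and outer Whitney structures and to balance them using the H\"older exponent $\alpha$. This two-sided length-area balancing is the genuine content of the Jones--Smirnov theorem, and the constants in the capacity bound degenerate as $\alpha\downarrow 0$, reflecting why no such removability can hold for arbitrary Jordan domains.
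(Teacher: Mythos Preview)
The paper does not prove this theorem at all; it merely quotes it as a known result of Jones and Smirnov, so there is no ``paper's proof'' to compare your attempt against. That said, your sketch contains a genuine error that would make the argument fail.

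The reduction to removability for continuous $W^{1,2}_{loc}$ functions is fine, as is the Whitney/Koebe bookkeeping and the inequality $\diam(Q_j)\le C(1-|z_j|)^\alpha$. The mistake is the capacity step: you claim one can build cutoffs $\varphi_n$ equal to $1$ near $\partial\Omega$ with $\int_{\C}|\nabla\varphi_n|^2\,dA\to 0$. This is impossible for \emph{any} Jordan curve. In the plane, $\int|\nabla\varphi_n|^2\to 0$ for such cutoffs is exactly the statement that $\partial\Omega$ has zero $W^{1,2}$-capacity, equivalently that $\partial\Omega$ is polar; but every nondegenerate continuum has positive logarithmic capacity. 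Concretely, the $2^{-n}$-shell around a Jordan curve has area at least $c\,2^{-n}$, while $|\nabla\varphi_n|\asymp 2^n$ there, so $\int|\nabla\varphi_n|^2\gtrsim 2^n\to\infty$. No amount of H\"older information changes this, and your own ``main obstacle'' paragraph hints at the issue without resolving it. Consequently the approximation $(1-\varphi_n)g\to g$ in $W^{1,2}_{loc}$ fails, since $\|g\nabla\varphi_n\|_2$ need not go to zero.

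The actual Jones--Smirnov mechanism does \emph{not} attempt to make the capacity of $\partial\Omega$ small. It exploits the continuity of $g$ in an essential way: roughly, one assigns to each Whitney cube $Q$ a ``shadow'' $s(Q)\subset\partial\Omega$ and shows, using the H\"older bound, that the oscillation of $g$ over a chain of cubes connecting $Q$ to its shadow is controlled by $\|\nabla g\|_{L^2}$ times a geometric factor summable over the decomposition. This yields the ACL property (absolute continuity on almost every line) for $g$ directly, without any capacity decay. If you want to reconstruct a proof, that is the direction to pursue; the capacity route is a dead end for curves.
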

\medskip

\noindent Adapting this result to our  setting we obtain
\medskip

\begin{corollary} \label{homeo42}  Suppose $\phi: \torus \to \torus$ is a homeomorphism that admits a welding
$$ \phi(z)  =  \left( f_+ \right)^{-1} \circ f_-(z), \quad  z \in \torus,
$$
where $f_\pm$ are conformal mappings of $\D$ and $\D_\infty$,
respectively, onto complementary Jordan domains $\Omega_\pm$.

Assume that $f_-$ (or $f_+$) is $\alpha$-H\"older continuous on the
boundary $\partial \D_\infty = \torus$. Then the welding is unique: any
other welding pair $(g_+, g_-)$ of $\phi$ is of the form
$$ g_\pm = \Phi \circ f_\pm, \quad \quad \Phi:\Chat \to \Chat \mbox{ M\"obius.}
$$
\end{corollary}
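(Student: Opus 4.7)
My plan is to glue the two welding pairs into a single homeomorphism $\Phi\colon\Chat\to\Chat$ that is conformal off $\Gamma = \partial\Omega_+ = \partial\Omega_-$, and then to invoke the Jones--Smirnov theorem to conclude that $\Gamma$ is conformally removable, which forces $\Phi$ to be M\"obius. Specifically, given a second welding pair $(g_+,g_-)$ with images Jordan domains $\tilde\Omega_\pm$, I set
\begin{equation*}
\Phi(w) := \begin{cases} g_+\circ f_+^{-1}(w), & w\in\overline{\Omega_+},\\ g_-\circ f_-^{-1}(w), & w\in\overline{\Omega_-}.\end{cases}
\end{equation*}
By Carath\'eodory's theorem both branches extend to homeomorphisms of the closures; on $\Gamma$ they agree, because for $w=f_+(z)=f_-(z')$ with $z,z'\in\torus$ the two welding identities $\phi=f_+^{-1}\circ f_- = g_+^{-1}\circ g_-$ give $g_+(z)=g_+(\phi(z'))=g_-(z')$. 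Hence $\Phi$ is a continuous bijection of the compact Hausdorff space $\Chat$, thus a homeomorphism, and it is conformal on $\Omega_+\cup\Omega_-=\Chat\setminus\Gamma$ as a composition of conformal maps.

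Next I would upgrade the boundary regularity into removability. The hypothesis gives $\alpha$-H\"older continuity of $f_-$ on $\torus$; since $f_-$ is conformal (hence its real and imaginary parts harmonic) on $\D_\infty$, a standard Poisson-kernel / Hardy--Littlewood estimate shows that $\alpha$-H\"older boundary values produce an $\alpha$-H\"older function on $\overline{\D_\infty}$. After conjugating by $z\mapsto 1/z$ to present $f_-$ as an $\alpha$-H\"older Riemann mapping $\D\to\Omega_-$, Theorem \ref{homeo41} yields that $\Gamma = \partial\Omega_-$ is conformally removable. Applying the definition of conformal removability to the homeomorphism $\Phi$, which is conformal off $\Gamma$, forces $\Phi$ to be conformal on all of $\Chat$, hence a M\"obius transformation; the identities $\Phi\circ f_\pm = g_\pm$ are immediate from the construction.

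The only non-routine analytic input is the regularity upgrade from H\"older on $\torus$ to H\"older on $\overline{\D_\infty}$, needed to match the hypothesis of Theorem \ref{homeo41}. This is a classical boundary-to-interior statement for harmonic functions (equivalently, for Riemann mappings onto Jordan domains) and is the main, though minor, obstacle; everything else is a formal gluing argument followed by a direct appeal to the Jones--Smirnov removability theorem.
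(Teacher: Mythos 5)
Your proposal is correct and follows essentially the same route as the paper: glue the two welding pairs into a homeomorphism $\Phi = g_\pm\circ f_\pm^{-1}$ of $\Chat$ that is conformal off $\Gamma$, then invoke the Jones--Smirnov removability theorem (Theorem \ref{homeo41}) to conclude $\Phi$ is M\"obius. The only difference is that you spell out details the paper leaves implicit --- the agreement of the two branches on $\Gamma$ via the welding identities, and the classical Hardy--Littlewood upgrade from H\"older boundary values of the conformal map to H\"older continuity on the closed disk, which is indeed the step needed to match the hypothesis of Theorem \ref{homeo41}.
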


\begin{proof} Suppose we have Riemann mappings $g_\pm$ onto complementary Jordan domains such that
 $$  \left( g_+ \right)^{-1} \circ g_-(z) = \phi(z) = \left( f_+ \right)^{-1} \circ f_-(z), \quad  z \in \torus.
 $$
 Then the formula
 $$\Psi(z) = \left\{\begin{array}{ll}
g_+ \circ  \left( f_+ \right)^{-1}(z) &\quad \mbox{if $z\in f_+(\D)$} \\
g_- \circ  \left( f_- \right)^{-1}(z)  &\quad \mbox{if $z\in
f_-(\D_\infty)$}
\end{array} \right.
 $$
 defines a homeomorphism of $\Chat$ that is conformal outside $\Gamma = f_\pm(\torus)$. From the Jones-Smirnov theorem we see that $\Psi$ extends conformally to the entire sphere; thus it is a M\"obius transformation.
\halmos \end{proof}
\medskip

As we shall see in Theorem \ref{th:4.1}, for circle  homeomorphisms
$\phi$ with derivative the exponentiated Gaussian free field, the
solutions $F\;$ to the auxiliary equation (\ref{homeo114}) will be
H\"older continuous  almost surely. Then $f_-= F|_{\D_\infty}$ is a
Riemann mapping onto a complementary component of the welding curve
of $\phi = \phi_\omega$. It  follows  that almost surely the  $\phi
= \phi_\omega$ admits a welding curve $\Gamma = \Gamma_\omega$
which is unique, up to composing with a M\"obius transformation.

\bigskip

\subsection{Extension of the homeomorphism}
\label{subse:extension}

In this section we  discuss  in detail
suitable methods of extending homeomorphisms $\phi: \torus \to \torus$ to
the unit disk; by reflecting across $\torus$ the map then extends to
$\C$.   Extensions of homeomorphisms $h:\R \to \R$ of the real line
are convenient to describe, and it is not difficult to find
constructions that sufficiently well  respect the conformally
invariant features of $h$. Given a homeomorphism $\phi: \torus \to \torus$
on the circle, we hence represent it in the form \beqla{homeo1}
 \phi(e^{2\pi i x}) = e^{2\pi i h(x)}
\eeq where  $h:\R \to \R$ is a  homeomorphism of the line with
$h(x+1) = h(x)+1$. We may assume that $\phi(1) = 1$, with $ h(0) =
0$. 



 We will now  extend the 1-periodic mapping $h$  to the upper  (or lower) half plane
 so that it becomes the identity map at large height. Then  a conjugation to a mapping of the disk is easily done. For the extension we use
 the classical Beurling-Ahlfors extension \cite{BA}
 modified suitably
far away from the real axis.

Thus, given a   homeomorphism   $h:\R \to \R$ such that \beqla{homeo2}
h(x+1) = h(x)+1, \quad x \in \R, \quad \mbox{with } h(0) = 0, \eeq
we   define our extension $F$ as follows. For $0 < y  < 1$ let
\beqla{BA}
F(x+iy) = \frac{1}{2} \int_0^1\bigl(  h(x+ty) + h(x-ty) \bigr) dt \hskip30pt \\
 +\;  i \int_0^1\bigl(  h(x+ty) - h(x-ty) \bigr) dt. \nonumber
\eeq Then $F=h$ on the real axis, and $F$ is a continuously
differentiable homeomorphism.

  Moreover, by \refeq{homeo2} it follows that for $y = 1$,
$$F(x+i) = x + i + c_0,
$$
where $c_0 = \int_0^1h(t)dt -1/2 \in [-1/2,1/2]$. Thus for $1 \leq y
\leq 2$ we set
  \beqla{ext17}
  F(z) = z + (2-y) c_0, 
  \eeq
and finally  have an extension of $h$ with the extra properties
  \beqla{homeo3}
F(z) \equiv z \quad \mbox{when } y = \Im m (z)  \geq 2, \eeq
    \beqla{homeo5}
F(z+k) = F(z)+k, \quad k  \in \IZ. \eeq

The original circle mapping admits a natural extension to the disk,
   \beqla{homeo51}
    \Psi(z) = \exp \bigl(2\pi i \,  F (\log z \,/ \, 2\pi i)  \bigr), \quad z \in \D.
 \eeq
  From (\ref{homeo1}), (\ref{homeo5}) we see that  this is a well defined homeomorphism of the disk with $\Psi |_{\torus} = \phi$ and  
   $\Psi(z) \equiv z$ for $|z| \leq e^{-4 		\pi}$.
   The distortion properties are not altered under this locally conformal change of variables,
     \beqla{homeo151}
    K(z, \Psi) = K(w, F), \quad \; z = e^{2\pi i\, w}, \;w \in \IH,
 \eeq
so we will reduce all distortion estimates for $\Psi$ to  the corresponding ones
 for $F$. Since $F$ is conformal for $y>2$ it suffices to  restrict the
 analysis to the strip 
  \beqla{Sdef}
  S=\R\times [0,2].
  \eeq
  
     To estimate $ K(w, F)$ we introduce some notation. Let
 $$ {\mathcal D}_n =  \{ \, [k 2^{-n}, (k+1) 2^{-n}] : k \in \IZ \}$$
 be the set of all dyadic intervals of length $2^{-n}$ and write
 $${\mathcal D}  = \{ {\mathcal D}_n : n \geq 0 \}.
 $$
 Consider  the measure
 $$\tau([a,b]) = h(b)-h(a).
 $$
 For a pair of intervals  $\NJ=\{J_1,J_2\}$ 
 let us
 introduce the following quantity
\begin{equation}
\delta_\tau(\NJ)=\tau(J_1)/\tau(J_2)+\tau(J_2)/\tau(J_1) \label{delta}.
\end{equation}
If  $J_i$ are the two halves  of an interval $I$,  
then
$\delta_\tau(\NJ)$ measures the local doubling properties of the measure $\tau$. 
In  such a case
we define $\delta_\tau(I) = \delta_\tau(\NJ)$. In particular, (\ref{qscond}) holds for the circle homeomorphism $ \phi(e^{2\pi i x}) = e^{2\pi i h(x)}$ if and only if the quantities $\delta_\tau(I)$
are uniformly bounded, for all (not necessarily dyadic) intervals $I$.
\medskip

The local
distortion of the extension $F$  will  be controlled by sums of the expressions
$\delta_\tau(\NJ)$ in the appropriate scale. For this, let us pave the strip $S$ by  
Whitney cubes $\{C_I\}_{I\in\mathcal D}$
defined by
$$C_I = \{ (x,y) : \; x\in I, \; 2^{-n-1} \leq y \leq   2^{-n}\} $$
for 
 $I \in{\mathcal D}_n$, $n>0$ and $C_I=I\times [\hf,2]$ for  $I \in{\mathcal D}_0$.
 Given an $I\in\mathcal D_n$ let $j(I)$ be the union of $I$ and its neighbours in ${\mathcal D}_n$ and 
  \beqla{calJ}
 \CJ(I):=\{\NJ=(J_1,J_2): J_i\in{\mathcal D}_{n+5}, J_i\subset  j(I)\}.
  \eeq
We define then
 \begin{equation}
\hskip40pt {K}_\tau(I) := \sum_{\NJ\in\CJ(I)}\delta_\tau(\NJ). \label{Kbound}
\end{equation}

With these notions we have the basic geometric estimate for the
distortion function, in terms of the boundary homeomorphism:
\medskip

 \begin{theorem} \label{alaReed}
 Let $F:\IH \to \IH$ be the extension of a $1$-periodic homeomorphism $h:\R \to \R$. Then  for each $I \in {\mathcal D}$
 \begin{equation}\label{kolmiot}
 \sup_{z \in C_I} K(z,F) \leq C_0 \, {K}_\tau(I), \quad \quad \; 
\end{equation}
with a universal constant $C_0$.
\end{theorem}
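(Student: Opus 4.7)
The plan is to establish the bound pointwise on each Whitney cube $C_I$ by direct differentiation of the Beurling--Ahlfors formula; the affine slab $1\le y\le 2$ (only relevant for $I\in\mathcal D_0$) will give uniformly bounded distortion and is handled separately. Fix $I\in\mathcal D_n$, $n>0$, and $z=x+iy\in C_I$, and introduce the nonnegative quantities
\[
p=\tau([x,x+y]),\quad q=\tau([x-y,x]),\quad R=h(x+y)-\overline h_{[x,x+y]},\quad S=\overline h_{[x-y,x]}-h(x-y),
\]
where $\overline h_{[a,b]}$ denotes the mean of $h$ over $[a,b]$ and nonnegativity follows from monotonicity of $h$. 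A short computation starting from the mean-value representations of $\Re F$ and $\Im F$ yields
\[
u_x=\frac{p+q}{2y},\quad u_y=\frac{R-S}{2y},\quad v_x=\frac{p-q}{y},\quad v_y=\frac{R+S}{y}.
\]

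The algebraic heart of the argument is that these are precisely the variables in which the Jacobian factors cleanly: $(p+q)(R+S)-(p-q)(R-S)=2(pS+qR)$ yields
\[
J_F(z)=u_xv_y-u_yv_x=\frac{pS+qR}{y^2}>0,
\]
confirming local orientation preservation. Bounding the Hilbert--Schmidt norm $\|DF\|_{HS}^2\le C(p^2+q^2+R^2+S^2)/y^2$ and dividing each of $p^2,S^2$ by $pS$ and each of $q^2,R^2$ by $qR$ in the denominator $pS+qR$ produces
\[
K(z,F)\le\frac{\|DF\|_{HS}^2}{J_F}\le C\Bigl(\frac{p}{S}+\frac{S}{p}+\frac{q}{R}+\frac{R}{q}\Bigr).
\]

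It remains to dominate each of these four ratios by $K_\tau(I)$, which is where the scale shift $n\to n+5$ enters. Monotonicity of $h$ gives $R\ge\tfrac12\tau([x+y/2,x+y])$, and since $|[x+y/2,x+y]|=y/2\ge 8\cdot 2^{-n-5}$ this sub-interval lies in $j(I)$ and contains at least seven full intervals of $\mathcal D_{n+5}$; choosing any such $J_*^R$ gives $R\ge\tfrac12\tau(J_*^R)$, and symmetrically $S\ge\tfrac12\tau(J_*^S)$ for some $J_*^S\in\mathcal D_{n+5}$, $J_*^S\subset j(I)$. Each of $[x,x+y]$ and $[x-y,x]$ is covered by at most $33$ consecutive intervals of $\mathcal D_{n+5}$ contained in $j(I)$, so $p$ and $q$ are dominated by the corresponding sums of $\tau(J)$. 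Substituting, for instance,
\[
\frac{q}{R}\le\frac{2\sum_J\tau(J)}{\tau(J_*^R)}\le 2\sum_J\delta_\tau((J,J_*^R))\le 2K_\tau(I),
\]
and the remaining three ratios are handled identically. For $I\in\mathcal D_0$ the BA region $y\le 1$ is treated as above with $n=0$, and on the affine slab $y\in[1,2]$ the map $F(z)=z+(2-y)c_0$ with $|c_0|\le 1/2$ has uniformly bounded distortion, which is absorbed by the trivial bound $K_\tau(I)\ge 2|\mathcal J(I)|$. The main obstacle is recognising the variables $p,q,R,S$ in which the Jacobian collapses to $(pS+qR)/y^2$; after this point the distortion bound is an elementary arithmetic inequality, and the final dyadic covering step is interval book-keeping controlled precisely by the choice of scale $n+5$, which ensures that all auxiliary intervals fit in $j(I)$.
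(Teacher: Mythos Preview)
Your proof is correct. Both the paper and you identify the same four quantities $p,q,R,S$ (the paper calls them $\alpha,\beta,\tilde\alpha,\tilde\beta$), and both reduce the distortion bound to controlling ratios among them by $\delta_\tau$-type terms over dyadic pieces of $j(I)$. The route differs, however. The paper quotes Reed's sharp pointwise inequality
\[
K(x+iy,F)\le\Bigl(\frac{\alpha}{\beta}+\frac{\beta}{\alpha}\Bigr)\Bigl[\frac{\tilde\alpha}{\alpha}+\frac{\tilde\beta}{\beta}\Bigr]^{-1}
=\frac{p^2+q^2}{pS+qR},
\]
then invokes Reed's original iteration (replacing the quasisymmetry constant there by the local $\delta_\tau$'s via an auxiliary lemma on halving intervals) to get $K\le 24\max\delta_\tau(\tilde I)$ over intervals with endpoints in $\{x,x\pm y/4,x\pm y/2,x\pm y\}$, and only afterwards passes to dyadic intervals. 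You instead compute $J_F=(pS+qR)/y^2$ directly, use the cruder but entirely adequate $K\le\|DF\|_{HS}^2/J_F$, and go straight to the dyadic covering. What you lose is the sharp constant and the intermediate non-dyadic statement; what you gain is a fully self-contained argument that needs neither the external reference to Reed's four-page computation nor the paper's halving Lemma~\ref{apureed}. The one step you left implicit---bounding $S/p$ and $R/q$---is just as easy: $R\le p$ and $S\le q$ by monotonicity, while $p$ and $q$ are bounded below by $\tau(J_*)$ for a single dyadic $J_*\in\mathcal D_{n+5}$ contained in $[x,x+y]$ (respectively $[x-y,x]$), which has length $y\ge 16\cdot 2^{-n-5}$.
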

\begin{proof}The distortion properties of the Beurling-Ahlfors extension are well studied in the existing litterature, 
but none of
these works gives directly Theorem 2.6 as the main point for us is the linear dependence on the local distortion ${K}_\tau(I)$. The most elementary extension operator is due to Jerison and Kenig \cite{JeKe}, see also \cite[Section 5.8]{AsIwMa}, but for this extension the linear dependence fails.

For the reader's convenience we sketch a proof for the theorem. We will
modify  the approach of Reed \cite{Re}, and start with the simple
Lemma.

 \begin{lemma} \label{apureed}
 For each dyadic interval $I = [k 2^{-n}, (k+1) 2^{-n}]$, with left half $I_1 = [k2^{-n},(k+1/2)2^{-n}]$ and right half $I_2 = I \setminus I_1$, we have
 $$\frac{1}{1+\delta_\tau(I)}  \;|\tau(I)|  \leq \, |\tau(I_1)|, \, |\tau(I_2)| \,  \leq \frac{\delta_\tau(I)}{1+\delta_\tau(I)} \;|\tau(I)|$$ with
$$ \frac{1}{|I|} \int_I h(t) - h(k 2^{-n}) dt \leq \frac{3\delta_\tau(I)}{1+3\delta_\tau(I)} \;|\tau(I)|$$ and
$$\frac{1}{|I|} \int_I h((k+1) 2^{-n}) -h(t) dt \leq \frac{3\delta_\tau(I)}{1+3\delta_\tau(I)}\; |\tau(I)|$$
 \end{lemma}
 \begin{proof} The definition of $\delta_\tau(I)$ gives
directly  the first estimate. As $h(t) \leq h((k+1/2)2^{-n})$ on
the left half and $h(t) \leq h((k+1) 2^{-n})$ on the right half of
$I$,
 $$ \frac{1}{|I|} \int_I h(t) - h(k 2^{-n}) dt \leq \left(\frac{1}{2} \frac{\delta_\tau(I)}{1+\delta_\tau(I)} + \frac{1}{2}\right) |\tau(I)| \leq \frac{3\delta_\tau(I)}{1+3\delta_\tau(I)}\; |\tau(I)|.
 $$
 The last estimate follows similarly.
 \halmos \end{proof}

To continue with the proof of Theorem \ref{alaReed}, the pointwise
distortion of the extension $F$ is easy to calculate explicitly, and
we obtain \cite{BA, Re} the following estimate, sharp up to a
multiplicative constant,
  \begin{equation}\label{distortioarvio}
 K(x+ iy,F) \leq \left( \frac{\alpha(x,y)}{\beta(x,y)} + \frac{\beta(x,y)}{\alpha(x,y)} \right) \left[ \frac{\tilde\alpha(x,y)}{\alpha(x,y)} + \frac{\tilde\beta(x,y)}{\beta(x,y)} \right]^{-1},
\end{equation}
  where
  $$\alpha(x,y) = h(x+y) - h(x), \quad \beta(x,y) = h(x) - h(x-y)
  $$
  and
  $$\tilde\alpha(x,y) = h(x+y) - \frac{1}{y}\int_x^{x+y}h(t) dt, \quad \tilde\beta(x,y) =  \frac{1}{y}\int_{x-y}^xh(t) dt\, -\, h(x-y).
  $$
Now the argument of Reed \cite[pp. 461-464]{Re}, combined with
Lemma \ref{apureed} and its estimates, precisely shows that
$K(x+iy,F) \leq 24\max \delta_\tau(\tilde I)$, where $\tilde I$ runs
over the intervals with endpoints contained in the set
  \begin{equation}\label{distortioarvio7}  \{x, x\pm y/4, x \pm y/2, x \pm y \}.
\end{equation}
\smallskip

Thus, for example, if we fix $k\in \IZ$ and $n\in \nanu$, we get for
the corner point $z = k 2^{-n} + i 2^{-n}$ of the Whitney cube
$C_I$ the estimate
  \begin{equation}\label{distortioarvio2}
 K( k 2^{-n} + i 2^{-n},F) \leq 24 \sum_{\NJ}\delta_\tau(\NJ), \quad \NJ=(J_1,J_2),\;\; J_i\in{\mathcal D}_{n+3},
\end{equation}
 where $J_i\subset j(I)$ as above.
  For a general point $z=x+iy \in C_I$, we have to take a few more generations of dyadic intervals. Here   $[x,x+y/4]$  has length at least $2^{-n-3}$. On the other hand, for any (non-dyadic) interval $\tilde I$ with $2^{-m} \leq |\tilde I | < 2^{-m+1}$, one observes that it contains a dyadic interval of length $2^{-m-1}$ and is contained
inside  a union of at most three dyadic intervals of length  $2^{-m}.$ By this manner one estimates 
$$\delta_\tau(\tilde I) \leq \sum_{\tilde {\bf J}}\delta_\tau( {\bf J}), \; \mbox{where }  { \bf J} = (J_1,J_2),\;\; J_i\in{\mathcal D}_{m+2} \;\mbox{ and } \; J_i \cap \tilde I \not= \emptyset.
$$
Choosing the endpoints of $\tilde I$ from the set in
(\ref{distortioarvio7}) then gives the bound (\ref{kolmiot}).
Note that the estimates hold also for $n=0$, since by (\ref{ext17}) we have $K(z,F) \leq 5/4$ whenever $y \geq 1$.
Hence the proof of Theorem
\ref{alaReed} is complete. \halmos \end{proof}



\section{Exponential of GFF and random homeomorphisms of $\torus$}
\label{se:circlehomeo}

\subsection{Trace of the Gaussian Free Field }

Let us recall that the 2-dimensional Gaussian Free Field (in other
words, the massless free field) $Y$ in the plane has the covariance
$$
\expec Y(x)Y(x')=\log \left(\frac{1}{|x-x'|}\right), \quad x,x'\in
\real^2.
$$
Actually, the definition of this field in the whole plane has to be
done carefully, because of the blowup of the logarithm at infinity.
However, the  definition of the trace $X:=Y_{|\torus}$ on the
unit circle $\torus$  avoids this problem, since it is formally obtained
by requiring (in the convenient complex notation)
 \beqla{eq:tr}
\expec X(z)X(z')=\log \left(\frac{1}{|z-z'|}\right), \quad z,z'\in
\torus .
 \eeq

The above definition needs to be made precise. In order to serve
also readers with less background in non-smooth stochastic fields,
let us first recall the definition of Gaussian random variables with
values in the space of distributions $\distr (\torus ).$ Recall
first that an element in  $F\in \distr (\torus )$ is real-valued if
it takes real values on real-valued test
functions. Identifying $\torus $ with $[0,1 )$ a  real-valued $F$ may be
written as
 $$F =a_0 +\sum_{n=1}^{\infty}\bigl(a_n\cos (2\pi nt)+b_n\sin (2\pi
nt)\bigr),$$
with real coefficients satisfying $|a_n|,|b_n|=O(n^a)$ for
some $a\in\real$. Conversely, every such Fourier series converges in
$\distr (\torus ).$

 Let $(\Omega ,\calF ,\prob )$ stand for a probability space. A map
$X:\Omega\to \distr (\torus)$ is a (real-valued) centered $\distr
(\torus)$-valued Gaussian if for every (real-valued) $\psi\in
\smooth_0(\torus )$ the map
$$
\omega\mapsto \langle X(\omega ),\psi \rangle
$$
is a centered Gaussian on $\Omega$. Here $\langle\cdot
,\cdot\rangle$ refers to the standard distributional duality.
Alternatively, one may define such a random variable by requiring
that a.s.
$$
X(\omega )=A_0(\omega ) +\sum_{n=1}^{\infty}\bigl(A_N(\omega )\cos (2\pi
nt)+B_n(\omega )\sin (2\pi nt)\bigr)
$$
where the $A_n,B_n$ are centered Gaussians satisfying  $\expec
A^2_n,\expec B^2_n =O(n^a)$ for some $a\in\real$. The random
variable $X$ is stationary if and only if the coefficients
$A_0,A_1,\ldots ,B_1,B_2,\ldots$ are independent.

 Due to Gaussianity, the
distribution of $X$ is uniquely determined by the knowledge of the
covariance operator $C_X:\smooth (\torus )\to \distr (\torus ),$
where
 $$ \langle C_X\psi_1,\psi_2 \rangle:=\expec\langle X(\omega
 ),\psi_1
\rangle\langle X(\omega ),\psi_2 \rangle .
 $$
In case the covariance operator has an integral kernel we use the
same symbol for the kernel, and in this case for almost every
$z\in\torus$ one has
$$
(C_X\psi ) (z) =\int_\torus C_X(z,w)\psi (w)m(dw),
$$
where $m$ stands for the normalized Lebesgue measure on $\torus .$
Most of the above definitions and statements carry directly on
$\sdistr(\real )$-valued random variables, but the above knowledge
is enough for our purposes.

The exact definition of  \refeq{eq:tr} is understood in the above sense:
\medskip

\defin{def:X} The trace $X$ of the 2 dimensional GFF on $\torus$
is a centered $\distr (\torus )$-valued Gaussian random variable such that
its covariance operator has the integral kernel
 $$ 
C_X(z,z')=\log\left(\frac{1}{|z-z'|}\right), \quad z,z'\in \torus
.
$$
\edefin
\medskip

 Observe that in the identification $\torus =[0,1)$ the covariance
 of $X$ takes the form
 \beqla{eq:trcov}
C_X(t,u)= \log\left(\frac{1}{2\sin
(\pi|t-u|)}\right)\quad\mbox{for}\;\; t,u\in [0,1).
 \eeq
 The existence of such a field is most easily established by 
 writing down the Fourier expansion:
 \beqla{eq:trsum}
X= \sum_{n=1}^{\infty}\frac{1}{\sqrt{n}}\bigl(A_n\cos (2\pi nt)+B_n\sin
(2\pi nt)\bigr), \quad t\in [0,1),
 \eeq
where all the coefficients $A_n\sim N(0,1)\sim B_n$ ($n\geq 1$) are
independent standard Gaussians. Writing $X$ as $\sum_{n}\frac{1}{\sqrt{n}}(\alpha_nz^n+
\bar\alpha_n\bar z^{n})$ with $|z|=1$ and $\alpha=\hf(A+iB)$ it is readily checked
that it has the stated covariance.
\medskip


What makes the trace $X$ of the $2$ dimensional GFF  particularly  natural for the circle homeomorphisms  are its invariance properties, that $X$   is Möbius invariant {\it modulo constants}. To see this
note that the covariance $C(z,z')=\log(1/|z-z'|)$ satisfies
the transformation rule
$$
C(g(z),g(z'))=C(z,z')+A(z)+B(z'),
$$
where $A$ (resp. $B$) is independent of $z'$ (resp. $z$),
whence the last two terms vanish in integration against mean zero test-functions.
\bigskip
 
It is well-known that with probability one $X(\omega )$ is not an
element in $L^1(\torus ),$ (or a measure on $\torus$), but it just
barely fails to be a function valued field. Namely, if $\varepsilon
>0$ and one considers the $\varepsilon$-smoothened field $(1-\Delta )^{-\varepsilon}X,$ one computes that this
field has a H\"older-continuous covariance, whence its realization
belongs to $C(\torus )$ almost surely. This follows from   the following fundamental result of Dudley that we will use repeatedly below.
\begin{theorem} \label{dudley} Let $(Y_t)_{t\in T}$ be a centered Gaussian field indexed by the set $T$, where $T$
is a compact metric space with distance $d$. Define
the (pseudo)distance $d'$ on $T$ by setting $d'(t_1,t_2)=(\expec
|Y_{t_1}-Y_{t_2}|^2)^{1/2}$ for $t_1,t_2\in T.$  Assume that $d':T\times T\to \real$ is continuous. For $\delta
>0$ denote by $N(\delta )$ the minimal number of balls of
radius $\delta$ in the $d'$-metric needed to cover $T$.
If 
\beqla{eq:dudley}
\int_0^{1}\sqrt{\log N(\delta )}\, d\delta <\infty,
\eeq
then $Y$ has a continuous version, i.e.   almost surely
the map $T\ni t\mapsto Y_t$ is continuous.
\end{theorem}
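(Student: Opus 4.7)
The plan is the classical Dudley chaining argument. Since $T$ is a compact metric space under $d$, it is separable; pick a countable $d$-dense subset $T_0 \subset T$, which is also $d'$-dense by the assumed continuity of $d'$. I shall produce a random but finite modulus of continuity for $t \mapsto Y_t$ on $T_0$, and then extend $Y$ continuously to all of $T$.

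For each $n \geq 0$ set $\varepsilon_n = 2^{-n}$ and fix a $d'$-covering $T_n \subset T$ of cardinality $N_n := N(\varepsilon_n)$. For each $t \in T_0$ select $\pi_n(t) \in T_n$ with $d'(t, \pi_n(t)) \leq \varepsilon_n$. Then the telescoping identity
$$
Y_t - Y_{\pi_0(t)} \,=\, \sum_{n=0}^{\infty} \bigl( Y_{\pi_{n+1}(t)} - Y_{\pi_n(t)} \bigr)
$$
reduces matters to controlling these individual chaining increments. Each one is a centered Gaussian with standard deviation $d'(\pi_n(t), \pi_{n+1}(t)) \leq \varepsilon_n + \varepsilon_{n+1} \leq 3\varepsilon_{n+1}$, and as $t$ ranges over $T_0$ the pair $(\pi_n(t), \pi_{n+1}(t))$ takes at most $N_n N_{n+1} \leq N_{n+1}^2$ distinct values.

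Combining the Gaussian tail bound $\prob(|Z| > \lambda\sigma) \leq 2 e^{-\lambda^2/2}$ with a union bound over these pairs yields
$$
\prob\Bigl( \sup_{t \in T_0} |Y_{\pi_{n+1}(t)} - Y_{\pi_n(t)}| > u_n \Bigr) \,\leq\, 2 N_{n+1}^2 \exp\Bigl( -\frac{u_n^2}{18\,\varepsilon_{n+1}^2} \Bigr).
$$
Choose $u_n$ so that the right-hand side is summable in $n$, for instance $u_n := C\,\varepsilon_{n+1}\bigl( \sqrt{n} + \sqrt{\log N_{n+1}}\bigr)$ with $C$ sufficiently large; Borel--Cantelli then gives, almost surely, $\sup_{t \in T_0} |Y_{\pi_{n+1}(t)} - Y_{\pi_n(t)}| \leq u_n$ for all large $n$. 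The hypothesis \refeq{eq:dudley} forces $\sum_n u_n < \infty$ by comparing the geometric sum against the entropy integral. Hence the telescoping series converges uniformly on $T_0$ almost surely, and the bound $\sum_{n \geq N} u_n \to 0$ furnishes a uniform modulus of continuity for $Y$ on $T_0$. Defining $Y_t$ for $t \in T \setminus T_0$ as the $d'$-limit of $Y$ along any approximating sequence in $T_0$ yields the desired continuous version, with continuity in the original topology of $T$ following from the assumed continuity of $d'$.

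The principal delicacy is the three-way balance between the metric entropy $\log N(\varepsilon)$, the Gaussian standard deviation $\varepsilon$ of the chaining increments, and the deviation threshold $u_n$: the union bound essentially forces $u_n \asymp \varepsilon_{n+1}\sqrt{\log N(\varepsilon_{n+1})}$, which is exactly the expression whose summability is captured by \refeq{eq:dudley}. The continuity hypothesis on $d'$ plays a secondary but essential role, promoting almost sure continuity on a dense countable subset to almost sure continuity on all of $T$.
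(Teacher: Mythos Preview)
The paper does not prove this theorem at all: immediately after the statement it writes ``For a proof we refer to \cite[Thm 1.3.5]{Adler} or \cite[Thm 4, Chapter 15]{Ka2}.'' Your write-up is the classical Dudley chaining argument, which is precisely what those references contain, so in spirit you are doing exactly what the paper defers to.

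The argument is essentially correct, but two steps are stated a bit elliptically and would benefit from one extra sentence each. First, the telescoping identity $Y_t-Y_{\pi_0(t)}=\sum_n(Y_{\pi_{n+1}(t)}-Y_{\pi_n(t)})$ is not an a priori almost-sure equality for all $t\in T_0$ simultaneously; the clean way is to show that the partial sums $Y_{\pi_N(t)}$ form an a.s.\ uniform Cauchy sequence (which your bound $\sum u_n<\infty$ gives), call the uniform limit $\widetilde Y_t$, and then observe that for each fixed $t$ one has $Y_{\pi_N(t)}\to Y_t$ in $L^2$, so $\widetilde Y$ is a version of $Y$. Second, the passage from ``$\sum_{n\geq N}u_n\to 0$'' to ``uniform modulus of continuity on $T_0$'' skips the cross-link at the top of the two chains: for $s,t\in T_0$ with $d'(s,t)\leq\varepsilon_N$ one still has to control $|Y_{\pi_N(s)}-Y_{\pi_N(t)}|$, and this requires one further union bound over the at most $N_N^2$ pairs in $T_N\times T_N$ at $d'$-distance $\leq 3\varepsilon_N$, handled exactly as your main estimate. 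Neither point is a genuine obstacle, just a place where the exposition could be tightened.
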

For a proof we refer to \cite[Thm 1.3.5]{Adler} or \cite[Thm 4, Chapter 15]{Ka2}. 
The second result we will need is the Borell-TIS inequality
(due to C. Borell, or, independently, B. Tsirelson, I. Ibragimov and
V. Sudakov). 
According to the inequality, the tail of the supremum is dominated by a Gaussian tail:
\beqla{eq:Borel-TIS}
\prob (\sup_{t\in T} |Y_t|>u)\leq A\exp (Bu-u^2/2\sigma_T^2),
\eeq
where $\sigma_T:=\max_{t\in T}(\expec Y_t^2)^{1/2},$
and the constants $A$ and $B$ depend on $(T,d')$,
see \cite[Section 2.1]{Adler}.
We shall also need an explicit quantitative version
of this inequality in the special case where $T$ is an interval:
\lem{le:Talagrand}
Let $T=[x_0,x_0+\ell]$, and suppose that the covariance is Lipschitz
continuous with constant $L,$ i.e. $\expec |Y_t-Y_{t'}|^2
\leq L|t-t'|$ for $t,t'\in T.$ Assume also that $Y_{t_0}\equiv 0$ for a $t_0\in T.$ Then
$$
\prob (\sup_{t\in T} |Y_t|>\sqrt{L\ell}u)\leq c(1+u)e^{-u^2/2},
$$
where $c$ is a universal constant. 
\elem
\begin{proof}
The result is essentially due to Samorodnitsky \cite{Sa}
and Talagrand \cite{Ta}. It is a direct consequence of
\cite[Thm 4.1.2]{Adler} since after scaling
it is possible to assume that $L=1=\ell$, and then $\sigma_T
\leq 1$ and $N(\varepsilon )\leq 1/\varepsilon^2.$\halmos
\end{proof}

\subsection{White noise expansion}
\label{subse:white}

The Fourier series expansion  \refeq{eq:trsum} is often not the most suitable representation of $X$ for explicit calculations. Instead, we shall apply a representation that uses
white noise in the upper half plane, due to Bacry and Muzy
\cite{BaMu}. The white noise representation is very convenient since it allows one  to consider correlation between different scales both
in the stochastic side and on $\torus$ in a flexible and geometrically transparent manner. Moreover, as we define the exponential
of the field $X$ in the next subsection we are then able to refer to
known results in \cite{BaMu} and elsewhere.

To commence with, let $\lambda$ stand for the hyperbolic area measure in the upper half plane $\BbbH$,
$$
\lambda (dxdy )=\frac{dxdy}{y^ 2}.
$$
Denote by $w$ a white noise in $\BbbH$ with respect to measure
$\lambda $. More precisely,   $w$ is a centered Gaussian process indexed by
Borel sets  $A\in \CB_f (\BbbH)$, where
$$
\CB_f (\BbbH):=\{ A\subset \BbbH \; \mbox{Borel} \; |\;
\lambda (A)<\infty \;\mbox{and} \; \sup_{(x,y),(x',y')\in A}|x'-x| <\infty \},
$$
i.e. Borel sets of finite hyperbolic area and finite width, and with
the covariance structure
$$
\expec \big( w(A_1)w(A_2 )\big ) =\lambda (A_1\cap A_2),\quad
A_1,A_2\in \CB_f (\BbbH).
$$
We shall need a periodic version of $w$, which can be
identified with a white noise on $\torus\times\real_+ .$  Thus,
define $W$ as the centered Gaussian process, also indexed
by $\CB_f (\BbbH )$, and with covariance
$$
\expec \big(  W(A_1) W(A_2 )\big ) =\lambda \Bigl(A_1\cap \bigcup_{n\in\IZ} (A_2+n)\Bigr).
$$

\vskip 5mm

\begin{figure}[h]

\setlength{\figheight}{.25\textheight}

\begin{center}
\includegraphics[height=\figheight]{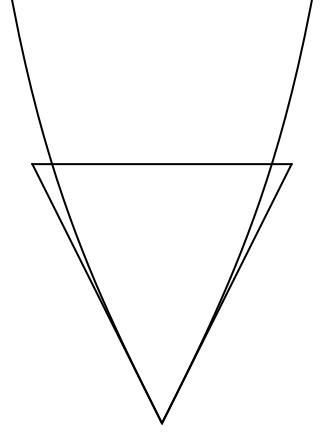}
\caption{White noise dependence of the fields $H(x)$ and $V(x)$}
\vspace{-1.1\figheight}
$H+x$\\
\vspace{.3\figheight}
$V+x$\\
\vspace{.45\figheight}
$x$\\
\end{center}

\end{figure}

\vskip 10mm

We will represent the trace $X$ using  the following random field $H(x)$.  
Consider the wedge shaped
region
$$
H:= \{ (x,y)\in\BbbH \;: \; -1/2<x< 1/2,\;\; y>\frac{2}{\pi}\tan
(|\pi x|)\} 
$$
and formally set
$$
H(x):=W(H+x), \quad \quad x \in \R/\IZ,
$$
see Fig. 1.
The reader should think about the $y$ axis as parametrizing the spatial scale. Roughly,
the white noise at level $y$ contributes to $H(x)$ in that spatial scale.

To define $H$
rigorously we introduce a short distance cutoff parameter $\varepsilon>0$ and, given
any $A\in \CB_f (\BbbH)$, let  $A_\varepsilon := A\cap \{ y>\varepsilon\}$. Then set
\beqla{eq:truncation}
H_\varepsilon (x):=W(H_\varepsilon+x).
\eeq

According to  Dudley's Theorem  \ref{dudley}  one may pick a version of the
white noise $W$ in such a way that the map
$$
(0,1)\times\real \supset (\varepsilon ,x)\mapsto 
H_\varepsilon (x)
$$
is continuous. In the limit $\varepsilon\to 0^+$ we nicely recover
$X$: 

\lem{le:whitetrace} One may assume that the version of the
white noise is chosen so that for any $\varepsilon >0$  the map
$x\mapsto H_\varepsilon (x)$ is continuous, and as
$\varepsilon\to 0^+$  it converges in $\distr (\torus)$ to a random field
$H$. Moreover  
$$
H\sim  X+G,
$$
 where $G\sim N(0,2\log
2)$ is a (scalar) Gaussian factor, independent of $X$.
 \elem
 
\begin{proof}
Observe first that we may compute formally (as $ H
(\cdot )$ is not well-defined pointwise) for $t\in (0,1)$
\beqla{eq:formal} \expec H(0)H
(t)\nonumber = \nonumber \lambda (H\cap (H+t))+\lambda (H\cap
(H+t-1)). \eeq The first term in the right hand side can be computed
as follows:
 \beqla{eq:1term}
 \lambda (H\cap (H+t))&=&2\int_{t/2}^{1/2}\left(
 \int_{(2/\pi )\tan (\pi x)}^\infty
 \frac{dy}{y^2} \right )dx = \pi\int_{t/2}^{1/2}\cot (\pi x)\, dx
 \nonumber\\
&=& \log\frac{1}{\sin(\pi t/2)}.\nonumber \eeq Hence we obtain by
symmetry
 \beqla{eq:symmetry}  \expec H (0)
H (t) &=& \log\Bigl(\frac{1}{\sin(\pi t/2)}\Bigr)+
\log\Bigl(\frac{1}{\sin(\pi (1-t)/2)}\Bigr) \\ &=&\nonumber 2\log
2+\ \log\Bigl(\frac{1}{2\sin(\pi t)}\Bigr). \eeq 
The stated relation
between $X$ and $H$ follows immediately from this as soon as we
prove the rest of the theorem. Observe that the covariance of the
smooth field $H_\varepsilon (\cdot )$ on 
$\torus$ converges to the above pointwise for $t\neq 0$. A
computation shows that for any $\delta
>0$ the  covariance of the field
$$
[0,1]\times [0,1) \supset (\varepsilon, x) \mapsto (1-\Delta
)^{-\delta } H_\varepsilon (x ):=H_{\varepsilon,\delta} (x )
$$
(at $\varepsilon  =0$ one applies the covariance computed in
\refeq{eq:symmetry}) is H\"older-continuous in the compact set
$[0,1]\times\torus$, whence Dudley's theorem yields the existence of a
continuous version on that set, especially $ H_{\varepsilon,\delta}
(\cdot ) \to H_{0,\delta} (\cdot)$ in $C(\torus )$, hence in $\distr
(\torus )$. By applying $(1-\Delta )^{\delta }$ on both sides we
obtain the stated convergence. Especially, we see that the
convergence takes place in any of the Zygmund spaces $C^{-\delta
}(\torus )$, with $\delta >0. $ \halmos

\end{proof}

The logarithmic singularity in the  covariance of $H(x)$ is produced
by the asymptotic shape of the region $H$ near the real axis.
 It will be often convenient to work with the following auxiliary field, which is
geometrically slightly easier to tackle while  for small
scales it does not distinguish between $w$ and its periodic
counterpart $ W$. Thus,  consider this time the
triangular set
 \beqla{eq:Vregion}
V:=\{ (x,y)\in\BbbH \;: \; -1/4<x< 1/4,\;\; 2|x|<y< 1/2\} .
\eeq
and let $ V_\varepsilon (x)=W(V_\varepsilon+x)$ (see Fig 1.).
The existence of the limit $V(x):= \lim_{\varepsilon\to 0^+}V_\varepsilon(\cdot )$ is
established just like for $H$ and we get the covariance 
 \beqla{eq:covV}
\expec V(x) V
(x')=\log\Bigl(\frac{1}{2|x-x'|}\Bigr)+{2 |x-x'|}-1\ \eeq
for $|x-x'|\leq 1/2$ (while for  $|x-x'|> 1/2$ the periodicity must be taken into account).

Since the regions $H$ and $V$ have the same slope at the real axis
the difference
$H(\cdot )-V(\cdot )$ is a quite regular field:

\lem{le:difference} Denote $\xi:=\sup_{x\in [0,1),\varepsilon\in
(0,1/2]}| V_\varepsilon (x)- H_\varepsilon
(x)|.$ Then almost surely $\xi<\infty$. Moreover, $\expec
\exp(a\xi)<\infty$ for all $a>0.$
 \elem
 
\begin{proof}
We may write for $\varepsilon \in [0,1/2]$
$$
 V_\varepsilon (x)-H_\varepsilon (x) =
 T_\varepsilon (x)-  G(x),
$$
where $T_\varepsilon(x)$ and $G(x)$ are constructed as $V_\varepsilon(x)$ out of the sets
 $$G:=H\cap \{ y\geq 1/2\},\ \ \,T:=V\setminus (H\cap \{ y<
1/2\}).
$$ 

Observe first that $ G (x)$ is independent of
$\varepsilon$ and it clearly has a Lipschitz covariance in $x$, so by
Dudley's theorem and (\ref{eq:Borel-TIS})  almost surely the
map $ G (\cdot )\in C(\torus)$ and, moreover, the tail
of $\| G (\cdot )\|_{C(\torus)}$ is dominated by a
Gaussian, whence it's  exponential moments are  finite.

In a similar manner, the exponential integrability of $\sup_{x\in
[0,1),\varepsilon\in [0,1]}|T_\varepsilon (x)|$ is
deduced from Dudley's theorem and \refeq{eq:Borel-TIS}  as soon as we verify that there is an
exponent $\alpha >0 $ such that for any $|x-x'|\leq 1/2$ we have
 \beqla{eq:holder}
 \expec | T_\varepsilon (x)- 
T_{\varepsilon'}(x')|^2\leq
c(|x-x'|+|\varepsilon-\varepsilon'|)^\alpha . \eeq In order to
verify this it is enough to change one variable at a  time. Observe
first that if $1>\varepsilon >\varepsilon'\geq 0$
\beq
 \expec | T_\varepsilon (x)- 
T_{\varepsilon'}(x)|^2 =\lambda \bigl(T\cap (\varepsilon'<y<\varepsilon )\bigr) \leq
\int_{\varepsilon'}^\varepsilon cx^3\leq
c'|\varepsilon'-\varepsilon|,\nonumber
 \eeq
 where we  applied the inequality
$0\leq (t/2)-(1/\pi)\arctan ( \pi t/2)\leq 2t^3.$  

Next we estimate
the dependence on $x$. Denote $z:=|x-x'|\leq 1/2$. We note that for
any $y_0\in (0,1/2) $ the linear measure of the  intersection $\{
 y=y_0\}\cap (T\Delta (T+z))$ is bounded by $\min (2z,4y_0^3)$. Hence, by
 the definition of $T_\varepsilon$ and the fact that for $z=|x-x'|\leq 1/2$
the periodicity of $W$ has no effect on estimating $T$, we obtain 
 \beq
 \expec | T_\varepsilon (x)- 
T_{\varepsilon}(x')|^2 &\leq& \expec | T_0(x)-
T_0(x')|^2=\lambda (T\Delta(T+z))\nonumber\\
&\leq &2z\int_{z^{1/3}}^{1/2}\frac{dy}{y^2}
+\int_0^{z^{1/3}}\frac{4y^3}{y^2}\leq cz^{2/3},
 \nonumber
 \eeq
 which finishes the proof of the lemma.
\halmos
\end{proof}
\bigskip

\subsection{Exponential of $X$ and the random homeomorphism $h$.}\label{subse:exponential}

We are now ready to define the exponential of the
free field discussed in the Introduction and use it to define  the random circle homeomorphisms.

By stationarity, the covariance
$$
\gamma_{H}(\varepsilon):={\rm Cov \,}(
H_\varepsilon (x))=\expec | H_\varepsilon (x)|^2
$$
is independent of $x$, as is the quantity
$\gamma_{ V}(\varepsilon)$
defined analoguously. Fix
$\beta >0$ (this parameter could be thought as an "inverse
temperature"). Directly from definitions, for any $x$ and for any
bounded Borel-function $g$ on  [0,1) the processes
\beqla{eq:martingales}
 && \varepsilon\mapsto \exp \bigl(\beta H_\varepsilon
 (x)-(\beta^2/2)\gamma_{ H}({\varepsilon})\bigr)\qquad \mbox{and}\\
 && \varepsilon\mapsto \int_0^1\exp \bigl(\beta H_\varepsilon
 (u)-(\beta^2/2)\gamma_{ H}(\varepsilon)\bigr)g(u)\, du
\eeq are $L^1$-martingales with respect to {\it decreasing}
$\varepsilon\in (0,1/2],$ whence they converge almost surely.
Especially, the $L^1$-norm stays bounded and the Fourier-coefficients
of the density $\exp \bigl(\beta H_\varepsilon
 (x)-(\beta^2/2)\gamma_{ H}(\varepsilon)\bigr)$ 
converge as $\varepsilon\to 0^+$. 

Now comparing these expressions with (\ref{f1}) and Lemma \ref{le:whitetrace} we are led to the exact definition of our desired exponential 
$$"d\tau= e^{\beta X(z)}dz".$$ 
Indeed,   by the  weak$\hbox{}^*$-compactness
of the set of bounded positive measures we have the existence of  
the almost sure limit measure\footnote{Observe that the limit measure is weak$\hbox{}^*$-measurable in the sense that
for any $f\in C(\torus )$ the integral $\int_\torus f(t)\tau (dt)$ is a well-defined random variable. In this paper all our random measures on $\torus$ are  measurable 
(i.e. they are measure--valued random variables) 
in this sense. A simple limiting argument then shows  that
e.g. $\tau (I)$ is a random variable for any interval $I\subset\torus$.}
\beqla{eq:limitmeasure}\mbox{a.s.}\qquad \lim_{\varepsilon\to 0^+}
e^{ \bigl[\beta H_\varepsilon
 (x)-(\beta^2/2)\gamma_{ H}(\varepsilon)\bigr]} e^{-\beta \,G} \,dx/2^{\beta^2}  =:\tau (dx) \quad
 \rm{w}^*\;\;\mbox{in}\;\;\CM(\torus ),                        \eeq
 where $\CM (\torus)$ stands for bounded Borel measures on
 $\torus$
 and $G\sim N(0,2\log 2)$ is a
 Gaussian (scalar) random variable. 
 \medskip
 
In a similar manner one deduces the existence of the almost sure
limit \beqla{eq:limitmeasure2} && \lim_{\varepsilon\to 0^+} \exp
\bigl(\beta V_\varepsilon
 (x)-(\beta^2/2)\gamma_{ V}(\varepsilon)\bigr)dx
 \stackrel{ \rm{w}^*}{=}:\nu(dx)
                      \eeq
 Lemma
\ref{le:difference} and stationarity yield
immediately 

\lem{le:comparision} There are versions of $\tau$
and $\nu$ on a common probability space, together with an almost
surely finite and positive random variable $G_1$, with $\expec
G_1^a<\infty$ for all $a\in\real,$ so that for all Borel sets 
$B$  one has
$$
\frac{1}{G_1} \, \tau (B)\leq \nu(B)\leq {G_1}\, \tau (B).
$$
\elem 

Observe that the random variable $G_1$ is independent of the set $B$. Thus, the measures are a.s. comparable.
\medskip

Limit measures of above type, i.e. measures that are obtained as
martingale limits of products (discrete, or continuous as in our
case) of exponentials of independent Gaussian fields have been
extensively studied in the literature. The study
 of "multiplicative chaos " starts with Kolmogorov, various versions of multiplicative cascade models
 were advocated by Mandelbrot \cite{Mandel} and others, and Kahane made fundamental contributions
 to the rigorous mathematical theory, see \cite{Ka1},\cite{Ka3}, \cite{KaPe}.  We shall make use of these works, and  \cite{BaMu},
\cite{RoVa} 
in particular, which study in detail random measures
closely  related to our  $\nu$. 
 
 For us  the key points in constructing and understanding  the random circle homeomorphism are the following properties of the measure $\tau$ and its variant $\nu$.  

\thm{th:fact1} Assume that
$\beta <\sqrt{2}.$ 

\noindent {\bf (i)}\quad There are $a=a(\beta )>0$ and an a.s. finite 
random constant $c=c (\omega ,\beta )$ 
such that for all subintervals $I\subset [0,1)$ it holds
$$
0<\tau (I)\leq c(\omega,\beta)|I|^a.
$$ 
Especially, $\tau$ is non-atomic.

\smallskip

\noindent{\bf (ii)}\quad For any subinterval $I\subset [0,1)$ the
measure $\tau$ satisfies
\begin{equation}
\tau(I)\in L^p(\omega), \ \ \ p\in(-\infty,2/\beta^2).
\label{lp1}
\end{equation}
Moreover, if $p\in (1,2/\beta^2)$, then
\begin{equation}
\E \,\tau(I)^p<c(\beta,p)|I|^{\zeta_p},\qquad \mbox{with}\;\; \zeta_p >1\label{lp}.
\end{equation}

\noindent{\bf (iii)}\quad One can replace $\tau$ by the measure $\nu$ in the
statements (i) and (ii). \ethm

\begin{proof}
 We shall make use of one more auxiliary field, which (together with its exponential)
  is described in
 detail in \cite{BaMu}\footnote{$U_0$ corresponds to the simple case of log-normal MRM,
 see  \cite[p. 462, (28)]{BaMu}, and $T=1$ in \cite[p. 455, (15)]{BaMu}.}. Define
 $$
U:=\{ (x,y)\in\BbbH \;: \; -1/2<x< 1/2,\;\; 2|x|<y\} .
 $$
 and  for $x\in\BbbR$, let $U(x)=w(U+x)$. Here note in particular, that $w$ is the nonperiodic white noise.
 
 The covariance of $U(\cdot
)$ is easily computed (see \cite[(25), p. 458]{BaMu}), and we obtain
 \beqla{eq:covH}
&&\expec U(x) U
(x')=\log\Bigl(\frac{1}{\min  (y,1)}\Bigr)\quad \mbox{where}\;\;
y:=|x-x'|.
 \eeq
As before define the cutoff field  $U_\varepsilon (x)=w(U_\varepsilon+x)$. Then
 $U_\varepsilon$ is  (locally) very close
to our field $V_\varepsilon(\cdot )$. 
Indeed, let $I$ be an interval of length $|I|=\hf$. Then
$V(\cdot)|_I$ is equal in law with $w(\cdot+V)|_I$ since
the periodicity of the white noise $W$ will not enter.
Thus we may realize $U_\varepsilon|_I$ and $V_\varepsilon|_I$
 for $\varepsilon \in (0, 1/2)$ 
 in the same
probability space so that 
$$
U_\varepsilon-V_\varepsilon:=Z= w(x+U\cap \{y>1/2\}).
$$
We may again apply Dudley's theorem and
 eq.  \refeq{eq:Borel-TIS}  to the random
variable
  \beqla{eq:difference2}
 \xi_1:=\sup_{x\in I, \, \varepsilon\in
(0,1/2]}
|V_\varepsilon (x)-U_\varepsilon (x)| < \infty \quad \mbox{almost surely.}
 \eeq
  Moreover, $\expec \exp(a\xi_1)<\infty$ for
all $a>0.$  By denoting $G_2:= \exp(a\xi_1)$ we thus have 
an analogue of Lemma \ref{le:comparision}
  \beqla{eq:difference3}
\frac{1}{G_2} \, \tau (B)\leq \nu(B)\leq {G_2}\, \tau (B),
 \eeq
 for all $B\subset I$, and the auxiliary variable $G_2$ satisfies
$\expec G_2^p<\infty$ for all $p\in\real$.
As an aside, note that we cannot have \refeq{eq:difference3} for the full interval $I = [0,1]$, as $V$ is $1$-periodic while $U$ is not.

In a similar manner as for the measures $\tau$ and $\nu$
one deduces the existence of the almost sure
limit
 \beqla{eq:limitmeasure3} 
   \lim_{\varepsilon\to 0^+} \exp \bigl(\beta U_\varepsilon
 (x)-(\beta^2/2)\gamma_{U}(\varepsilon)\bigr)dx =:\eta (dx), \quad
                      \eeq
where the  limit takes place locally weak$\hbox{}^*$ on the space
of locally finite Borel-measures on the real-axis. 

Now for proving the theorem, by (\ref{eq:difference2}) and Lemma \ref{le:difference} it is enough to check
the corresponding claims (i) and (ii) for the random measure $\eta $, 
as one may clearly assume that $|I|\leq 1/2$.
We start with claim  (ii), which  in the  case of positive moments $p>0$  is
due to Kahane (see \cite{KaPe},\cite{Ka1}). Bacry and Muzy \cite[Appendix D]{BaMu}
give a nice proof  by adapting the argument  of Kahane and Peyriere
\cite{KaPe} (who considered a cascade model) to cover the measure $\eta$. 

Finiteness of negative moments is announced in \cite[Prop. 3.5]{RoVa},
where it is stated that the argument given by Molchan  \cite{Mo}
in the case of the cascade model carries through. For the readers convenience,
we  include the details for the negative moments in an  Appendix.

Fact \refeq{lp} for $\eta$ is \cite[Theorem 4]{BaMu} where
it is observed that one may take $\zeta_p=p-\beta^2(p^2-p)/2$.
In order to treat (i), 
choose $p\in (1,2/\beta^2)$ and let $a>0$ be so small that
$b:=\zeta_p-pa>1.$
Chebychev's inequality in combination with \refeq{lp} yields that
$\prob (\eta (I)>|I|^a )\lesssim |I|^b$. In particular, $ \sum_{I}\prob (\eta (I)>|I|^a )<\infty$, where one sums over dyadic subintervals of $[0,1)$. The same holds
true if one sums over the same dyadic subintervals shifted by their half-length. This observation in combination with  the Borel-Cantelli lemma yields the desired upper estimate for the measure $\eta $. This immediately implies that $\eta$
is non-atomic.

Finally, in order to sketch a proof of the non-degeneracy of $\eta$ over any subinterval, we partition the upper half plane into vertical strips and define 
\begin{equation}
U (x,j):= w((U+x)\cap
\{1/(j+1) <y< 1/j\}).
\label{Veps}
\end{equation}
Let then
$$  f_j(x) = \exp \bigl(\beta U(x,j)-(\beta^2/2)\gamma_{U}(j)\bigr),$$ where  $  \gamma_{U}(j) = {\rm Cov \,}\bigl(U(x,j)\bigr)$. 
We may now write $\eta$ as the a.s.
limit
$$
\eta (dx)=
\mbox{w$\hbox{}^*$-}\lim_{k\to\infty}\left(\prod_{j=0}^kf_j(x,\omega)\right)dx,
$$ where the densities $f_j(x,\omega)$  are independent and
a.s. bounded from below by a positive constant. Moreover, $\expec f_j(x)=1$
for each $x,j$. Let $I$ be a dyadic
subinterval and denote $Y_k(\omega):=\int_I\Bigl(\prod_{j=1}^kf_j(x,\omega)\Bigr)\, dx.$ By Kolmogorov's 0-1 law, probability for $\lim_{k\to\infty} Y_k=0$ is either zero or one. The first alternative can be ruled out by observing that $\expec Y_k=|I|$ for all $k$
and that $(Y_k)_{k\geq 1}$ in an $L^p$-martingale
with $p>1$, according to fact (ii).
Let us finally remark that the non-degeneracy and non-atomic nature  for $\eta$ can also be found in \cite{Ka1} and \cite{Ka3}, see also \cite[Theorems 1 and 2]{BaMu}.
\halmos
\end{proof}

Note that the exact scaling law of the measure $\eta$ we used in  the above proof  is given in
 \cite[Thm. 4]{BaMu}. Indeed, for any $\varepsilon, \lambda\in (0,1)$ one has the equivalence of
laws
 $$U_{\varepsilon\lambda}(\lambda \cdot)|_{[0,1]} \sim G_\lambda + U_\varepsilon |_{[0,1]} 
 $$
where $G_\lambda\sim N(0,\log (1/\lambda ) )$ is a Gaussian independent of
$U.$  Therefore,  one has the equivalence of laws for measures on $[0,1]$:
 \beqla{eq:scaling} \eta (\lambda\cdot)\; \sim \;
\lambda e^{\beta G_\lambda + \log(\lambda) \beta^2/2} \, \eta \eeq 
and hence  scale invariance of the ratios
 \beqla{eq:scaling2} \frac{\eta ([\lambda x,\lambda y])}{\eta ([\lambda a,\lambda b])}Ê\; \sim \;
 \frac{\eta ([ x, y])}{\eta ([ a, b])}. \eeq 
In turn, the exact
scaling law of $\tau$ is best described in terms of M\"obius
transformations of the circle. We do not state it as we
do not need it later on. 
\medskip

To finish this section we are now able define our circle homeomorphism $h$.
\smallskip

\defin{def:randomhomeo}
Assume that $\beta^2 <2.$ The random homeomorphism
$\phi:\torus\to\torus$ is obtained by setting 
 \beqla{randomhomeo1}
  \phi(e^{2\pi i x}) = e^{2\pi i h(x)},
  \eeq
   where we let
 \beqla{randomhomeo2}
h(x)= h_\beta(x)= \tau([0,x])/\tau([0,1]) \quad  \; for  \,\; x\in [0,1),
\eeq 
and extend periodically over  $\R$.
\edefin 
\smallskip

\begin{remark}  Theorem \ref{th:fact1}  (i) and (ii)  precisely contain what is needed to ensure that  $h$ is 
a H\"older continuous homeomorphism for $\beta^2< 2$.
As an aside, let us note
that  defining $\tau_\varepsilon$ as in  the LHS of eq. \refeq{eq:limitmeasure}
the limit $\lim_{\varepsilon\to 0}\tau_\varepsilon=0$ for  $\beta^2\geq 2$.
However, it is a natural conjecture that letting 
$h_\varepsilon$ to be given by  \refeq{randomhomeo2}
with $\tau$ replaced by $\tau_\varepsilon$, the  limit  for  $h_\varepsilon$ exists in a suitable sense as
$\varepsilon\to 0^+$  also for $\beta^2\geq 2$. Indeed, the normalized measure 
in eq.  \refeq{randomhomeo2} appears in the physics literature as
the Gibbs measure of a Random Energy model for logarithmically 
correlated energies \cite{leD}, \cite{bf}, \cite{bf1} and the $\beta^2> 2$ corresponds
to a low temperature "spin glass" phase. However,
we don't expect the limiting $h$ to be  continuous if $\beta^2> 2$. 
\end{remark}
\smallskip

\begin{question}  \  $\hskip1pt$ Is $\beta \to h_\beta(x)$ almost surely continuous ?
\end{question}
\smallskip

\section{Probabilistic estimates for Lehto integrals}
\label{se:probability}

\subsection{Notation and statement of the main estimate}
\label{se:statement}

We will now set to study the Lehto integral of
eq. (\ref{eq:4.7}) for the random homeomorphism constructed in the previous
section. As explained in Section
\ref{subse:extension}, it suffices to work
in the infinite strip $S=\mathbb R\times [0,2]$ where the extension $F$
  of the random 
homeomorphism $h$ is non-trivial. We use  the bound (\ref{kolmiot})  
for the (random) pointwise  distortion $K =K(z,F)$ of this extension, and  hence it  turns out convenient to define
$K_\tau$ in the upper half plane by setting
 \beqla{Koot}   K_\tau(z) :=   K_\tau(I) \quad \mbox{whenever }  z \in C_I.
  \eeq 
A lower bound for the Lehto integral
 (\ref{eq:4.7})
is then obtained by replacing $K$ there by $ K_\tau$. We similarly define $ K_\nu(z)$ for $z \in \IH$, via the modified Beurling - Ahlfors extension of the periodic homeomorphism  defined by the measure $\nu$.

It turns out that we only need  to control Lehto integrals centered at real 
axis and with some (arbitrarily small, but fixed) outer radius. For this
purpose
fix  (large) $p\in\BbbN$  and choose $\rho=2^{-p}$, where final choice of $p$
will be done in Subsection \ref{subse:lln} below.

Our main probabilistic
estimate is the following result. 
\thm{mainestimate}  Let $w_0\in\real$ and let $\beta<\sqrt{2}$. Then there exists $b>0$ and $\rho_0 >0$  together with $\delta(\rho) >0$ such that for
positive $\rho<\rho_0$ and $\delta<\delta(\rho)$ 
the Lehto integral satisfies the estimate
\begin{equation}
\prob\bigl(L_{{K}_\nu }(w_0,\rho^N,2\rho)<N\delta)\bigr)\leq
 \rho^{(1+b )N}.
 \label{prob}
\end{equation}
\ethm

Observe that
the estimates in  the Theorem  are in terms of 
${K}_\nu$ instead of ${K}_\tau$, which is the
majorant for the distortion of the extension of the actual
homeomorphism. However, this discrepancy will easily be taken care
later on in the proof of Theorem \ref{th:4.1} using the bounds in Lemma \ref{le:difference}. The proof of  of the Theorem will occupy  most of the present  section, i.e. Subsections \ref{subse:correlation}--\ref{subse:proposition} below. Finally, we consider the almost sure integrability of
the distortion
in Subsection \ref{subse:L1}.

\bigskip

We next fix the notation that will be used for the rest of the present
section, and explain the philosophy behind part (i) of the theorem.
Given $w_0$ we may choose the dyadic intervals in Theorem 2.6.
as $w_0+I$. Then, 
 by stationarity we may assume that $w_0=0$. Let $S_r$
denote the circle of radius $r>0$ with center at the origin. 
Define (with slight abuse)  for $r\leq 2\rho$ 
\beqla{Kr} K_\nu(r):=\sum_{I:C_I\cap
S_r\neq\emptyset}|I|{K}_\nu (I) 
\eeq
 and observe that
\beqla{eq:vertaa} L_{K_\nu}(0, \rho^N,2\rho )\geq c\sum_{n=1}^NM_n,
\eeq where 
\beqla{Mn} M_n=\int_{\rho^n}^{2\rho^n}{dr\over K_\nu(r)}.
\eeq Thus, in order to prove part (i) of the Theorem it is enough to
verify for $\beta <\sqrt{2}$ that for small enough $\rho>0$ and $0<\delta <\delta (\rho)$
one has
\beqla{eq:mainestimate}
\prob (\sum_{n=1}^N M_n<N\delta)\leq 
\rho^{(1+b)N} . 
\eeq

If the summands $M_j$ in \refeq{eq:mainestimate} were independent,
the estimate would  follow easily from basic large deviation
estimates. However, they are far from being independent.
Nevertheless, by the geometry of the setup in the white noise upper half plane,
one expects that there is some kind of exponential decay of dependence,
 but due to the complicated structure of the Lehto integrals we need to
  go through a non-trivial technical analysis in order to be able to get
  hold on the exponential decay.

\bigskip

\subsection{Correlation structure of the $M_j$:s}
\label{subse:correlation}

 In this Section we will study how the random
variables $M_n$ are correlated with each other.
As one can easily gather from
 the representation  of the field $\nu$ in terms of the white
noise, all of the variables $M_n$  with $n=1,2,\ldots$ are correlated with each other. 
Our basic strategy is to estimate $M_n$ from below by the quantity 
$$
M'_n=m_ns_n\sigma_n
$$
(see \refeq{Mnfinal} below), where the random variables $m_n$ depend only the white
noise on the scale $\sim \rho^n$ and form an independent set. The variables $s_n$ will provide an estimate of {\sl upscale correlations}, i.e.  the dependence of $M'_n$  on the white noise over the larger spatial scales $\{ |x|\gtrsim\rho^{n-1}\}$). In turn, the variables $\sigma_n$ measure the {\sl downscale correlations} that corresponds to the dependence of $M'_n$ on white noise over $\{ |x|\lesssim\rho^{n+1} \}$. It turns out that the downscale correlations are harder to estimate. 

 We start with the upscale
correlations and introduce some terminology. For a Borel-measurable
$S\subset \BbbH$ let $\CB_S$ be the $\sigma$-algebra generated by  the randoms variables
$W(A)$, where $A$ runs over  Borel-measurable subsets  $A\subset S$. We will call a  $\CB_S$
measurable random variable for short $S$ measurable.
 Let
$$V_I:=\cup_{x\in I}(V+x)
$$ 
where we recall $V$ is given by (\ref{eq:Vregion}). Then $\nu(I)/\nu(J)$ is $V_{I\cup J}$
measurable and  by (\ref{Kbound}) we see that $K_{\nu}(I)$ is $V_{j( I)}$
measurable (recall that $j(I)$ denotes the union of $I$ with
its neighboring dyadic intervals).  From (\ref{Kr}) we deduce that $M_n$ is $V_{B_n}$
measurable where $B_n:=B(0,4\rho^n)$. Indeed, the Whitney cubes
$C_I$ that intersect the annulus $A_n:=B(0,2\rho^n)\setminus
B(0,\rho^n)$ have $I\subset B(0,2\rho^n)$ and thus $j(I)\subset
B(0,4\rho^n)$.

We now decompose $V(\cdot )|_{B_n}$ to scales using the white
noise. Denote in general for $0\leq \varepsilon <\varepsilon',$
\begin{equation}
V (x,\varepsilon, \varepsilon'):= W((V+x)\cap
\{\varepsilon <y< \varepsilon'\}).
\label{Veps}
\end{equation}
Set
for $n\geq 1$
\begin{equation}
\psi_n(x)=V(x,0, \rho^{n-\hf})
\label{phin}
\end{equation}
and for $k\geq 0$
\begin{equation}
\zeta_k(x)=V(x,\rho^{k+\hf}, \rho^{k-\hf}).
\label{zk}
\end{equation}
Denoting
\begin{equation}
\Lambda_n=\{z\in\BbbH: y\leq \rho^{n-\hf}\} \label{lambdan}
\end{equation}
we see that in any open set  $U$  the field $\psi_n$ is
$(\bigcup_{y\in U}V_y)\cap\Lambda_n$ measurable. In a similar way,  $\zeta_k(x)$ is
$V_x\cap(\Lambda_{k}\setminus\Lambda_{k+1})$ measurable and since
these regions are disjoint
 the field $V$ decomposes to a sum of independent fields
\begin{equation}
V=\psi_n+\sum_{k=0}^{n-1}\zeta_k:=\psi_n+z_n.
\label{decomp}
\end{equation}

Let $\nu_n$ be the measure defined as $\nu$ but with  $V$ 
replaced by $\psi_n$. Inserting the second  decomposition in (\ref{decomp}) to
the measure $\nu$ we have, for any $I,J\subset B_n$
\begin{equation}
{\nu(I)\over\nu(J)}\leq \  {\nu_n(I)\over\nu_n(J)}\cdot{\sup_{x\in B_n}e^{\beta z_n(x)}\over
\inf_{x\in B_n}e^{\beta z_n(x)}}.
\label{mumu}
\end{equation}
The first decomposition in  (\ref{decomp}) then gives 
\begin{equation}
{\sup_{x\in B_n}e^{\beta z_n(x)}\over
\inf_{x\in B_n}e^{\beta z_n(x)}}\leq 
e^{\sum_{k=0}^{n-1}t_{n,k}}
:=s_n^{-1}
\label{sn}
\end{equation}
where
\begin{equation}
t_{n,k}:=\log{\sup_{x\in B_n}e^{\beta \zeta_k(x)}\over
\inf_{x\in B_n}e^{\beta \zeta_k(x)}}.
\label{tn}
\end{equation}
Thus if we let
\begin{equation}
\CM_n=\int_{\rho^n}^{2\rho^n}{dr\over K_{\nu_n}(r)} \label{CMn}
\end{equation}
we arrive to the following lower bound for $M_n$:
\begin{equation}
M_n\geq\CM_ns_n.
\label{decoup}
\end{equation}
This is the desired decoupling upscale. 
Note that the fields $\zeta_k$ become more regular
as $k$ decreases. This will lead to the following Proposition:

\vs{3mm}

\prop{propo1} The random variables $t_{n,k}$ satisfy 
\begin{equation}
\prob (t_{n,k}> u \rho^{(n-k)/2-1/4})\leq
 ce^{-u^2/c}.
 \quad \quad k=0, \dots , n-1,
\label{tauest}
\end{equation}
where $c$ is independent on $\rho$, $n$ and $k$. 
Moreover, $t_{n,k}$ and $t_{n,'k'}$ are independent if $k\neq k'$.
\eprop

The proof of this proposition is postponed to Subsection
\ref{subse:proposition} below.

\vs{3mm}

The decoupling downscale is done to the random variables $\CM_n$ in
(\ref{CMn}). Obviously $\CM_n$ and $\CM_m$ are dependent. However,
as in (\ref{Kr}), most of the terms $K_{n,I}:={K}_{\nu_n}(I)$   are
independent of $\CM_m$ if $m>n$. The few which are not we will
process further in a moment.

So let us first look at the dependence of the $K_{n,I}$ on the white
noise. For $U\subset \BbbR$ set $V^n_U:=V_{U}\cap \Lambda_n$. Then
$K_{n,I}$ is $V^n_{j( I)}$ measurable and $\CM_m$ is $V^m_{B_m}$
measurable. Some drawing will convince the reader that if ${\rm
dist}(j( I), 0)$  is not too small  $K_{n,I}$ and $\CM_m$  are
independent for $m>n$. Indeed, consider the ball
$B'_n=B(0,2\rho^{n+\hf})$ so that $B_{n+1}\subset B'_n \subset
B_n$. The regions $V^n_{B_n\setminus B'_n}$ are disjoint (see Figure
2).
 Thus the $\sigma$-algebras $\CB_{V^n_{B_n}\setminus  V^n_{B'_n}}$ are independent
 from each other for $n=1,2,\ldots $.

\vskip 5mm

\begin{figure}[h]

\setlength{\figheight}{.28\textheight}

\begin{center}
\includegraphics[height=5.0cm]{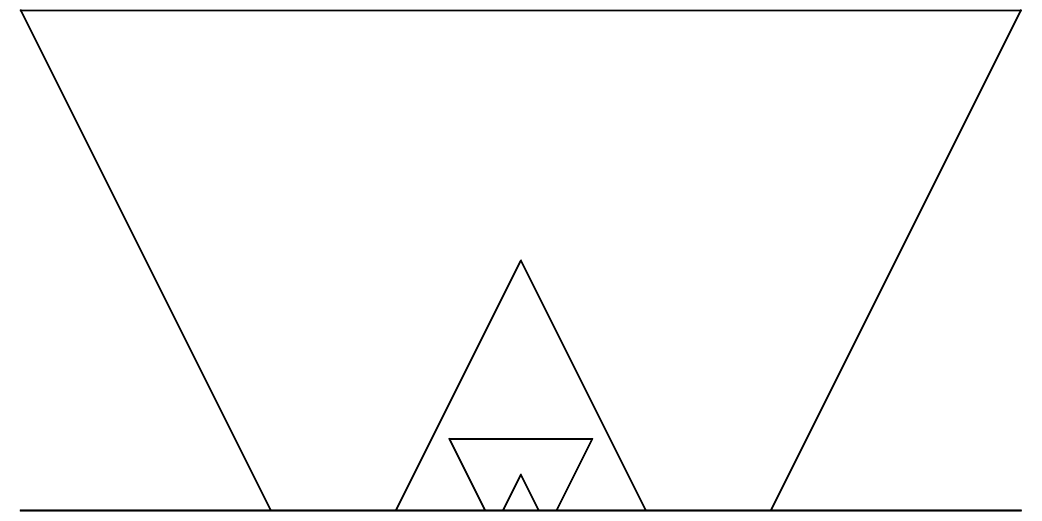}
\caption{A schematic picture of the regions  $V^n_{B_n\setminus B'_n}:=v_n$, where the $m_n$ are measurable}
\vspace{-.8\figheight}
$v_{n-1}$\\
\vspace{-2ex}
\vspace{.43\figheight}
\begin{footnotesize}
$v_{n}$\\
\end{footnotesize}
\end{center}
\end{figure}

\vskip 20mm

Let $\CI_n$ be the set of $I\in\CD$ such that the Whitney cube $C_I$
intersects the annulus $A(0,\rho^n,2\rho^n)$ and $j(I)\cap
B'_n\neq\emptyset$ (some drawing shows such $I\in\CD_{np+i}$ for $i=0,\pm1$) . Moreover, for each fixed $r\in (\rho^n,2\rho^n)$ let  $\CI_n(r)$ consist of those intervals $I$ for which
 $C_I\cap S_r\neq \emptyset$ and $ j(
I)\cap B'_n=\emptyset $.  By \refeq{Kr} we then have 
\begin{equation}
K_{\nu_n}(r)\leq \rho^n(\sum_{I\in \CI_n}K_{n,I}+ \sum_{I\in \CI_n(r)}\rho^{-n}|I|K_{n,I}):=\rho^n(L_n+L_n(r)),\qquad r\in (\rho^n,2\rho^n).
\label{Krbound}
\end{equation}
Thus inserting (\ref{Krbound}) into (\ref{CMn}) we get
\begin{equation}
\CM_n\geq\int_{\rho^n}^{2\rho^n}{1\over(L_n(r)+L_n)}{\rho^{-n}dr}.
\label{cmnbound}
\end{equation}

The term $L_n(r)$ in the integrand (\ref{cmnbound}) is independent of $\CM_m$, $m>n$.
However $L_n$ is not and we will decouple it now. From
 (\ref{Krbound}) and (\ref{Kbound}) we get
 \begin{equation}
L_n\leq \sum_{\NJ}\delta_{\nu_n}(\NJ)
\label{Ln1}
\end{equation}
where the sum runs over a set of $\NJ=(J_1,J_2)$ with $J_i\in\cup_{i=0,\pm 1}\CD_{np+5+i} $  and  $J_i\subset B_n$.
In
particular 
\begin{equation}
|J_i\setminus B'_n|\geq  2^{-np-7}=\rho^n2^{-7}.
\label{Jbound}
\end{equation}
The sum in (\ref{Ln1}) has an $n$-independent number of terms (with
multiplicities).

Next estimate $\delta_{\nu_n}(\NJ)$ in terms of  a $V^n_{B_n\setminus
B'_n}$ measurable term and perturbation: \qq
\delta_{\nu_n}(\NJ)&=&{\nu_n(J_1\setminus B'_n)+\nu_n(J_1\cap B'_n)\over
\nu_n(J_2\setminus B'_n)+\nu_n(J_2\cap B'_n)}+(1\leftrightarrow 2)\non\\
&\leq&{\nu_n(J_1\setminus B'_n)+\nu_n(J_1\cap B'_n)\over
\nu_n(J_2\setminus B'_n)}+(1\leftrightarrow 2)\non\\
&=&\delta_{\nu_n}(J_1\setminus B'_n, J_2\setminus B'_n)+ {\nu_n(J_1\cap
B'_n)\over \nu_n(J_2\setminus B'_n)}+{\nu_n(J_2\cap B'_n)\over
\nu_n(J_1\setminus B'_n)}. \non
\qqq
Then decompose the perturbation further downscale:
\begin{equation}
\nu_n(J_i\cap
B'_n)=\sum_{m=n+1}^\infty\nu_n(J_i\cap(B'_{m-1}\setminus B'_m))
\non
\end{equation}
and (recalling \refeq{calJ}) define 
\qq
L_{n,n}&=&\sum_{(J_1,J_2)\in {\mathcal J}(I),\; I\in\CI_{n}}\delta_{\nu_n}(J_1\setminus B'_n, J_2\setminus B'_n)\label{Lnn}\\
L_{n,m}&=&\sum_{(J_1,J_2)\in {\mathcal J}(I),\; I\in\CI_{n}}{\nu_n(J_1\cap(B'_{m-1}\setminus B'_m))\over
\nu_n(J_2\setminus B'_n)}+(1\leftrightarrow 2) \label{Lnm} \quad\mbox{for}\;\; m\geq n+1.\qqq
Then
\begin{equation}
L_n\leq\sum_{m=n}^\infty L_{n,m}.
\label{Lsum}
\non
\end{equation}
Defining
\begin{equation}
m_n=\int_{\rho^n}^{2\rho^n}{1\over (1+L_n(r)+L_{n,n})} \, \rho^{-n}dr
\label{mn}
\end{equation}
and using the inequality 
$$
L_n(r)+L_n\leq (1+L_n(r)+L_{n,n})(1+\sum_{m=n+1}^\infty L_{n,m})
$$
we get from  (\ref{cmnbound}) 
\begin{equation}
\CM_n\geq m_n \sigma_n
\label{cmnbound1}
\end{equation}
with
\begin{equation}
 \sigma_n:=(1+\sum_{m=n+1}^\infty L_{n,m})^{-1}
\label{cmnbound1}
\end{equation}
Combining this with (\ref{decoup}) we  arrive to the desired bound of $M_n$ in terms of
random variables localized in the white noise:
\begin{equation}
M_n\geq m_ns_n\sigma_n:=M_n'.
\label{Mnfinal}
\end{equation}

\prop{propo2} {\bf (i)}\quad The random variables $m_n$ are
${V^n_{B_n\setminus B'_n}}$ measurable, $0\leq m_n\leq1$, and they
form an independent set. Moreover, 
$$\prob (m_n\leq x)\leq Cx, \quad \mbox{for}\;\; x>0,$$
  where  $C$   is
independent on   $\rho$ and $n$.

\smallskip

\no {\bf (ii)}  There exists $a>0$, $q>1$  and $C<\infty$ (independent of $n,m$ and     
  $\rho$) such that for all $m>n\geq 1$ the random variable $L_{n,m}$  satisfies the estimate
\qq
\prob (L_{n,m}> \lambda)\leq C\lambda^{-q}\rho^{(m-n-1/2)(1+a)}.
\label{ellbound}
\qqq
Moreover, $L_{n,m}$ is
${V^n_{B_n\setminus B'_m}}$ measurable. Especially, $L_{n,m}$ and $L_{n',m'}$ are independent if $n>m'$ or $n'>m$.

 \eprop
The proof of this Proposition is postponed to Subsection
\ref{subse:proposition}.

\subsection{Law of large numbers and proof of Theorem \ref{mainestimate}}
\label{subse:lln}

Here we prove our main probabilistic estimate assuming 
Propositions  \ref{propo1} and \ref{propo2}. 
By (\ref{Mnfinal}) we need to consider
\beqla{eq:Pdef}
P_N:=\prob(\sum_{n=1}^N M'_n<N\delta)=\expec \KHI(\sum_1^Nm_ns_n\sigma_n\leq\delta N)=:\expec\KHI_{D_N},
\eeq
where we denoted $D_N:= {\{ \omega:\; \sum_1^Nm_ns_n\sigma_n\leq\delta N\}}$.
For the sake of notational clarity we  used above (and will often use later on) the shorthand 
$\KHI(A)$ for the indicator function $\KHI_A$.
In order to obtain the desired bound for $P_N$ we insert suitable auxiliary characteristic functions in the expectation. Define
\begin{equation}
\KHI_n:=\prod_{m=n+1}^\infty\KHI(L_{n,m}\leq 2^{n-m} \delta^{-1/4})
\prod_{m=0}^{n-1}\KHI(t_{n,m}\leq 2^{m-n}\log (\hf\delta^{-1/4})):=\prod_{m\neq n}\KHI_{n,m}.
\label{khin}
\end{equation}
On the support of $\KHI_n$ we have 
$$\sum_{m=n+1}^\infty L_{n,m}\leq \delta^{-1/4}
$$
and thus (for $\delta<1$ say)
$$
\sigma_n\geq \hf\delta^{1/4}.
$$
Similarily $\sum_{m=0}^{n-1}t_{n,m}\leq \log \hf\delta^{-1/4}$ and so
$$
s_n\geq 2\delta^{1/4}.
$$
Insert next
$$1=\prod_{n=1}^N(\KHI_n+(1-\KHI_n)):=\prod_{n=1}^N(\KHI_n+
\KHI^c_n)$$ 
 in the expectation in  (\ref{eq:Pdef}) and expand to get
$$
P_N=\sum_{A\subset\{1,\dots,N\}}\expec \KHI_{D_N}\KHI_A
\KHI_{A^c}^c
$$
where $\KHI_A=\prod_{n\in A}\KHI_n$ and $\KHI^c_{A^c}=\prod_{n\in A^c}\KHI^c_n$. 
On the support of $\KHI_{D_N}\KHI_A
\KHI_{A^c}^c$ one has
$$
N\delta\geq\sum_nm_ns_n\sigma_n\geq \delta^\hf\sum_{n\in A}m_n
$$
 so
\qq\label{Pchi}
P_N&\leq&\sum_{|A|> \alpha N}\expec \KHI( \sum_{n\in A}m_n\leq \delta^\hf N
)+\sum_{|A|\leq \alpha N}\expec\KHI^c_{A^c},
\qqq
where we choose $\alpha: =\min (1,a)/8$ with $a$ taken from Proposition \ref{propo2} (ii).
Observe that $\alpha$ is independent of $\rho, \delta$ and $N.$

Let us consider the two sums on the RHS of (\ref{Pchi}) in turn. For the first
one we use independence: let $m_A:=\sum_{n\in A}m_n$ then
\begin{equation}
P(m_A<\delta^\hf N)\leq e^{\delta^\hf tN}\expec e^{-tm_A}=e^{\delta^\hf tN}\prod_{n\in A}\expec e^{-tm_n}.
\label{ind}
\end{equation}
By Proposition \ref{propo2} (i)
\begin{equation}
\expec e^{-tm_n}\leq Cx+e^{-tx}\leq 2e^{-tx(t)}
\label{}
\end{equation}
where the auxiliary variable $x=x(t)$ is chosen so
 that $Cx(t)=e^{-tx(t)}$. Here $x(t)\to 0$ and $tx(t)\to\infty$ as $t\to\infty$. Thus assuming $\delta$ small enough and
taking $t=t(\delta)$ such that  $x(t)=2\delta^\hf/\alpha$, in the case $|A|\geq \alpha N$ the right side of
(\ref{ind}) is bounded
by $2^Ne^{-\delta^\hf t(\delta )N}$ where $\delta^\hf t(\delta)\to\infty$ as $\delta\to 0$.
Hence
\begin{equation}
\sum_{|A|>\alpha N}\expec \KHI( \sum_{n\in A}m_n\leq \delta^\hf N
)\leq 2^Ne^{-g(\delta)N}.
\label{1st}
\end{equation}
where $g(\delta)\to \infty$ as $\delta\to 0$.

 For the second sum in (\ref{Pchi})
we  need to bound
$$
\expec\KHI^c_{B}:=\expec\prod_{n\in B}(1-\KHI_n)
$$
for $|B|\geq (1-\alpha)N$. For that purpose, we shall make use of  the elementary identity
\begin{equation}\label{simple}
1-\prod_{j=1}^\infty (1-a_j)=\sum_{j=1}^\infty a_j\prod_{r=1}^{j-1}(1-a_r),
\end{equation} 
valid for any sequence $(a_j)_{j\geq 1}$ with $a_j\in  [0,1]$ for all $j\geq 1.$
Recall eq. (\ref{khin}) and denote $\KHI^c_{n,m}:=1-\KHI_{n,m}$ . We also set 
$\KHI^c_{n,m}:=0$ for $m<0.$ For any fixed $n$ 
arrange the variables $\KHI^c_{n,m}$ with $m\in\integer$ into a sequence
in some order
and apply the identity \refeq{simple} to write
\begin{eqnarray}
1-\KHI_n=1-\prod_{{m\in\integer},\; {m\neq n}}(1-\KHI^c_{n,m})
=\sum_{{\ell\in\integer},\; {\ell\neq 0}}\KHI^c_{n,n+l}\tilde\KHI_{n,\ell},
\end{eqnarray}
with certain variables $\tilde\KHI_{n,\ell}$ satisfying $0\leq \tilde\KHI_{n,\ell}\leq 1 $. Let us denote
\qq
\KHI_n^+
:=\sum_{\ell >0}\KHI^c_{n,n+\ell}\tilde\KHI_{n,\ell}\quad \mbox{and}\quad 
\KHI_n^-
:=\sum_{\ell <0}\KHI^c_{n,n+\ell}\tilde\KHI_{n,\ell}.
\qqq
Then $\KHI_n^\pm\leq 1$ (since $\KHI_n^++\KHI_n^-=1-\KHI_n$) and
\qq
\KHI_n^\pm\leq 
\sum_{\pm \ell>0}\KHI^c_{n,n+\ell}.
\label{expansion}
\qqq
We may then estimate
\qq
\prod_{n\in B}(1-\KHI_n)&=&\prod_{n\in B}(\KHI_n^++\KHI_n^-)=\sum_{(s_n=\pm )_{ n\in B}}\prod_{n\in B}\KHI_n^{s_n}\non\\
&\leq&\sum_{s:N_+>(1-2\alpha)N}\ \prod_{n:s_n=+}\ \KHI_n^{+}+
\sum_{s:N_+\leq(1-2\alpha)N}\ \prod_{n:s_n=-}\ \KHI_n^{-}
\label{product}
\qqq
where $N_+$ is the number of $n$ in the set $B$ such that $s_n=+$. We estimate
the expectations of the two products on the RHS in turn.

For the first product, let $D\subset \{1,\dots,N\}$ with $p:=|D|\geq ( 1-2\alpha)N$. List the
elements of $D$ as $n_1<n_2<\dots<n_p$. Then, as $0 \leq \KHI^+_{n_j} \leq 1$,
\begin{equation}
\expec \KHI^+_{n_1}\cdots \KHI^+_{n_{p}}\leq\sum_{\ell_1>0}\expec \KHI^c_{n_1,n_1+\ell_1} \KHI^+_{n_2}\cdots\KHI^+_{n_{p}} \leq
\sum_{\ell_1>0}\expec  \KHI^c_{n_1,n_1+\ell_1}\KHI^+_{n_{i_{2}} }\cdots \KHI^+_{n_{p}},
\label{1}
\end{equation}
where $n_{i_{2}}$ is the smallest $n_j$ larger than $n_1+\ell_1$.
Iterating we get
\begin{equation} \label{lisa3}
\expec \KHI^+_{n_1}\cdots \KHI^+_{n_{p}}
\; \leq \; \sum_{r=1}^p \; \sum_{(\ell_1,\dots,
\ell_r)}\expec \prod_{j=1}^r  \KHI^c_{n_{i_j},n_{i_j}+\ell_j}
\end{equation}
where $n_{i_{j+1}}$ is the smallest $n_j$ larger than $n_{i_{j}}+\ell_j$ and  $n_{i_{1}}=n_1$. As the intervals $[n_j,n_j+\ell_j]$ cover the set $D$, the $r$-tuples $(\ell_1,\dots,
\ell_r)$ in the above sum satisfy
\begin{equation}\label{lsum}
\sum_{J=1}^r \ell_j\geq p-r
\end{equation}

 Next, by Proposition \ref{propo2} (ii) the factors in the product in (\ref{lisa3})
are independent and thus
$$
\expec \prod_{j=1}^r  \KHI^c_{n_{i_j},n_{i_j}+\ell_j}=\prod_{j=1}^r \expec  \KHI^c_{n_{i_j},n_{i_j}+\ell_j}
$$
From (\ref{ellbound}) and (\ref{khin}) we deduce
$$
\expec  \KHI^c_{n_j, n_j+\ell_j}\leq C(\rho)\delta^{q/4}(2^q\rho^{1+a})^{\ell_j}
$$
whereby
\begin{equation}
\expec \KHI^+_{n_1}\cdots \KHI^+_{n_{p}}
\leq \sum_{r=1}^p \; \sum_{(\ell_1,\dots,
\ell_r)}(C(\rho)\delta^{q/4})^r(2^q\rho^{1+a})^{\sum \ell_j}  \label{4}
\end{equation}
Using (\ref{lsum})
we see that RHS is bounded by
$$
\rho^{(1+a/2)p}\sum_{r=1}^p \,(C(\rho)\delta^{q/4})^r \,\sum_{(\ell_1,\dots,
\ell_r)}(2^q\rho^{a/2})^{\sum \ell_j}
$$
For an upper bound drop the constraints on $\ell_i$ to bound (\ref{4}) by
$$
\rho^{(1+a/2)p}\sum_{r=1}^p(C(\rho)\delta^{q/4})^r
\bigl(\sum_{\ell=1}^\infty (2^{q}\rho^{ a/2})^\ell\bigr)^r
$$
Choosing first $\rho$ small enough and then  $\delta\leq\delta(\rho)$ this is bounded by 
$$
C(\rho)\delta^{1/4}\rho^{(1+a/2)p}\leq C(\rho) \delta^{1/4}\rho^{(1+a/2)(1-2\alpha)N}\leq C(\rho)
\delta^{1/4}\rho^{(1+2b)N}
$$
for a constant $b>0$ by our choice of $\alpha$. The expectation of the first sum in eq. (\ref{product})
is then bounded by
\qq
C(\rho) 2^N\delta^{1/4}\rho^{(1+2b)N}.
 \label{1sum}
 \qqq

Consider finally the second sum in eq. (\ref{product}). We proceed as for the
first sum this time considering a set $D\subset \{1,\dots,N\}$ with elements
$n_1>n_2>\dots>n_p$
with $p\geq \alpha N$.  Now we write $ \KHI^-_{n_1}\leq\sum_{\ell_1>0}\KHI^c_{n_1,n_1-\ell_1}$
and end up with the analogue of eq. (\ref{lisa3}):
\begin{equation}
\expec \KHI^-_{n_1}\cdots \KHI^-_{n_{p}}
\leq \sum_{r=1}^p\; \sum_{(\ell_1,\dots,
\ell_r)} \; \prod_{j=1}^r\expec  \KHI^c_{n_{i_j},n_{i_j}-\ell_j}
 \label{3}
\end{equation}
where $n_{i_{j+1}}$ is the largest $n_j$ smaller than $n_{i_{j}}-\ell_j$
and $n_{i_{1}}=n_1$, and this time Proposition \ref{propo1} was used 
for independence.
From the same  Proposition we also get 
$$
\expec  \KHI^c_{n,n-\ell}\leq c \, e^{-c2^{-2\ell}\rho^{-\ell+\hf}(\log \delta)^2}.
$$
For small enough $\rho$ we have $2^{-2\ell}\rho^{-\ell+\hf}\geq (\ell+\rho^{-{_1\over^8}})\rho^{-{_1\over^8}}$ for all $\ell\geq 1$. Hence
$$
\prod_{j=1}^r\expec  \KHI^c_{n_{i_j}, n_{i_j}-\ell_j}\leq c^r \, \exp\Bigl(-c(\log\delta )^2\rho^{-{_1\over^8}} \bigl(r\rho^{-{_1\over^8}}+\sum_{j=1}^r\ell_j \bigr)\Bigr)
$$
As $\rho<1$, by (\ref{lsum}) we also have 
$$
r\rho^{-{_1\over^8}}+\sum_{j=1}^r\ell_j \;  \geq \,(p+\sum_{j=1}^r\ell_j )/2.
$$
Thus
$$
\prod_{j=1}^r\expec  \KHI^c_{n_{i_j}, n_{i_j}-\ell_j}\leq
\exp\bigl(c\rho^{-{1\over 8}}(\log\delta )^2p/2\bigr) c^r\exp\bigl(-c(\log\delta )^2\rho^{-{_1\over^8}} \sum_{j=1}^r\ell_j \bigr)
$$
Now recall that $p\geq \alpha N$, take $\delta$ small enough, and proceed as above by summing first  over the $\ell_j$:s, and then performing a geometric sum over $r$ in order to conclude that
the second sum in  (\ref{product}) has the upper  bound  
$$
2\exp\bigl(-c\rho^{-{1\over 8}}(\log\delta )^2N\alpha/2\bigr) 
$$
For small $\delta$ this is by far dominated by the bound  (\ref{1sum}),
and therefore 
\qq
\expec\KHI^c_{B}\leq 2^{N+1}\delta^{1/4}\rho^{(1+2b)N}. 
 \label{1111}
 \qqq
Going back to equation  (\ref{Pchi}), and recalling  (\ref{1st}) with (\ref{1sum}) and (\ref{1111}), we conclude that
for $\delta\leq \delta(\rho)$
\qq
P_N\leq 2^{2N+2}\delta^{1/4}\rho^{(1+2b)N}. 
 \label{1112}
 \qqq
 which gives the claim of Theorem  \ref{mainestimate}.
 \hfill\halmos

\subsection{Proofs of the Propositions}
\label{subse:proposition}

We will now prove the Propositions \ref{propo1} and \ref{propo2} of Subsection
\ref{subse:correlation} describing the statistics of $m_n$,
$L_{n,m}$ and $t_{n,k}$. We start by noting that the random measures $\nu_n(\cdot)$ and $\rho^{n-1} \nu_1(\rho^{1-n}\cdot)$ are equal in
law. Especially, the $m_n$ are i.i.d. and it suffices to  study $m_1$.
Similarly $\zeta_k|_{B_n}$ equals in law with $\zeta_1|_{B_{n-k+1}}$
and thus $t_{n,k}$ equals $t_{n-k+1,1}$ in law. The value
$k=0$ is slightly different, but it can be treated exactly in the same manner as the case $k\geq 1.$ Finally,
$L_{n,m}$ and
 $L_{1,m-n+1}$  are equal in law. 

 We need first the following Lemma.
 \lem{le:delt1} 
There exists $q,q_1>1$ and $C>0$ {\rm(}each independent of $\rho${\rm)} such that
for all  intervals $J,I\subset [-1/4 , 1/4]$  satisfying
$|J|\leq 2 |I|$,
and with mutual distance at most $100|I|$, one has
\begin{equation}
\prob \bigl(\,\delta_{\nu}(J,I) >\lambda \bigr)\leq C\lambda^{-q}\left({|J|\over |I|}\right)^{q_1}.
\label{eps1}
\end{equation}
\elem 
\begin{proof}
We use the comparision \refeq{eq:difference3} with the measure $\eta$
in order to estimate
\beqla{eq:g2}
\nu(J)/\nu (I)\leq G_2 \,\eta(J)/\eta (I), 
\eeq
where we recall that all the moments of the variable $G_2$ are finite. Next, in case $|I|\leq 1/100$ we may scale further by using the exact scaling
law \refeq{eq:scaling2},  and apply the translation invariance of $\eta$ to deduce that $\eta(J)/\eta (I)\sim \eta(J')/\eta (I')$, where now 
$I',J'\subset [0,1] $ with $1/100 \leq  |I'|$ and $|J'|\leq |J|/|I| \leq 100 |J'|.$ In the case $|I|\geq 1/100$ no scaling is  needed. 

In this situation, if $r<\infty$ it follows from Proposition \ref{lisa5} that $\eta(I')^{-1}\in L^r$ uniformly with respect to the $I'$. We can thus  fix exponents $1<q<\widetilde q<p< 2/\beta^2$ and get by \refeq{eq:g2},   H\"older's inequality and Theorem \ref{th:fact1}
\begin{eqnarray}\label{lisa7}
\|\nu(J)/\nu(I)\|_{q}&\leq& C\|\eta(J)/\eta(I)\|_{\widetilde q}= C\|\eta(J')/\eta(I')\|_{\widetilde q} \\
&\leq& C\|\eta(J')\|_p\leq C(|J|/|I|)^{\zeta(p)/p},\nonumber
\end{eqnarray}
 where $\zeta(p)>1$. The constant $C$ depends only on the exponents $q,\widetilde q$ and $p$. Thus
\qq
\prob (\,\delta_{\nu}(J,I)> \lambda)\leq C\lambda^{-q}(|J|/|I|)^{q\zeta(p)/p}
\label{ellbound0}
\qqq
The desired bound follows by choosing the exponent $q>1$ close enough to $p$ in order to ensure that $q_1:=q\zeta(p)/p>1.$ 
 \hfill\halmos
\end{proof}
\medskip

 Let us then discuss $m_1$.
Observe that the denominator of the integrand in (\ref{mn}) can
be dominated as follows:
 \begin{equation}
1+L_{1,1}+L_1(r)\leq 1+L_{1,1}+\sum_{m=0}^\infty 2^{-m}k_m(r)\label{L1}
\end{equation}
where for $r\in (\rho,2\rho )$ and $m\geq 0$ one sets
\begin{equation}
k_m(r):=\sum_{I \in\CD_{p+m}}K_{1,I}1_{C_I\cap S_r\neq\emptyset}.
\label{kmr}
\end{equation}
 For any  fixed $r\in (\rho, 2\rho )$ the sum \refeq{kmr} has at most four non-zero terms.

For $m\geq 0$  denote by ${\mathcal H_m}$ the set of all pairs $\NJ =(J_1,J_2)$
that contribute to $k_m(r)$ in \refeq{kmr}
for some $r\in (\rho, 2\rho)$.  To estimate $\delta_{\nu_1}(\NJ)$, we may scale by the factor $\rho^{-\hf}$ in order to 
consider instead the identically distributed quantity $\nu(J'_1)/\nu(J'_2)$,
where  now $J'_1,J'_2 \subset [-1/4,1/4]$. Thus Lemma  \ref{le:delt1} applies. As  we additionally have $|J_1|=|J_2|$,  there is $q>1$ and a constant $C>0$ such that
\begin{equation}
\prob (\delta_{\nu_1}(\NJ)>R)\leq CR^{-q} \quad \mbox{for all}\quad
\NJ\in\cup_{m\geq 0} {\mathcal H_m}.
\label{eps2}
\end{equation}

 Choose next $\alpha>0$ and  $\gamma\in (0,1)$ such that $4\alpha\sum_m2^{m(\gamma-1)}\leq 1$
together with $\gamma q>1$.  Fix $R>0.$ We observe that by these choices
\begin{eqnarray*}
\delta_{\nu_1}({\NJ})\leq \alpha 2^{\gamma m}R \;\; \mbox{for all}\;\; \NJ\in {\mathcal H_m},\; m\geq 0\quad \Longrightarrow \quad L_1(r)\leq R \;\;\mbox{for all}\;\; r\in (\rho, 2\rho).
\end{eqnarray*}
Since  we have 
the obvious estimate $\# ({\mathcal H_m})\leq c2^m$ for the number of the
pairs in ${\mathcal H_m}$, by combining the above implication  with the uniform estimate \refeq{eps2} one may estimate
\begin{eqnarray*}
&&\prob (L_1(r(\sigma))>R \;\; \mbox{for some}\;\; r\in (\rho, 2\rho))
\;\leq \;\sum_{m=0}^\infty\sum_{\NJ\in {\mathcal H_m}}\prob (\delta_{\nu_1}(\NJ)>\alpha 2^{\gamma m}R)\non\\
&&\phantom{moooore}\leq\; CR^{-q}\sum_mc2^{m}2^{-q\gamma m}\quad 
\leq\quad CR^{-q}.
\end{eqnarray*}

In a similar vain we may apply Lemma \ref{le:delt1} to immediately obtain  the corresponding tail  estimate for  $L_{1,1}.$ Indeed, 
by \refeq{Lnn} this
depends only on a  finite ($\rho$-independent) number of ratios $\delta_{\nu_1}(I_1,I_2)$, with $I_1,I_2\subset [-4\rho ,4\rho]$
and  $|I_1|, |I_2|\geq 2^{-7}\rho,$  see (\ref{Jbound}).
Putting things together, we obtain (for  $R>1$, say) the bound
 \begin{equation}
\prob (m_1<1/R)\leq C R^{-q}\leq C R^{-1},
\label{ber}
\end{equation}
with $C$ is independent of $\rho .$

\vs{3mm}

Consider next $L_{n,m}$ with $m>n$ and use $L_{n,m}\sim L_{1,m-n+1}$.
By (82) $L_{1,m-n+1}$ is bounded from above by a sum of terms  (with $\rho$-independent upper bound for their number) 
$$\nu_1(J)/\nu_1(I)$$ 
where $2^{-8}\rho \leq |I| \leq 2^{-4}\rho $ and $|J|\leq \rho^{m-n+\hf}$, and in addition $I,J\subset [-4\rho,4\rho ]$.  The constant $C$ above is independent of $m,n$ and $\rho$. Via scaling the desired bound (\ref{ellbound})  is now a direct consequence of Lemma
\ref{le:delt1}, as we observe that $|J|/|I|\leq C \rho^{m-n-\hf}$.

\vs{3mm}

Finally we turn to  $t_{n,1}$ given in  (\ref{tn}). By scaling
we may take the
sup and the inf over $x\in B_n\cap\real$ of $\exp({\beta\widetilde\psi})$ where $\widetilde\psi:=\psi(\cdot,\rho^{3/2},
\rho^{{1/2}})$ and
we may 
replace there $\widetilde\psi$ by
 $\widehat\psi:=\widetilde\psi(\cdot)-\widetilde\psi(0)$.   The covariance of 
 $\widehat\psi$ is clearly $c\rho^{-3/2}$-Lipschitz
and length of the interval $B_n\cap\real$ is $8\rho^n.$
Lemma \ref{le:Talagrand} yields that
$$
\prob (|\widetilde\psi |> \lambda c \rho^{n/2-3/4})\leq
C(1+\lambda)e^{-\hf\lambda^2},
$$
which  finishes the proof of the remaining Proposition \ref{propo1}.\hfill\halmos

\subsection{Integrability of $K_\nu$}
\label{subse:L1}

In next section we shall also make use of the the following observation:

\lem{KL1} Let $\beta<\sqrt{2}$. Then almost surely $ K_\nu  \in L^1([0,1]\times [0,2] )$. 
\elem
\begin{proof}
Recall that  $S= \R \times [0,2]$ is tiled by the Whitney
squares
 $C_I$. By definition,
on such a square    $ K_\nu$ is a finite sum of  ratios 
 $\nu (J_1)/\nu (J_2)$ with $|J_1|=|J_2|\leq 2^{-4}$ and  of
 controlled mutual distance as in Lemma \ref{le:delt1}. Thus, for  $|J_i|$
 small enough $J_i$ lie on a common interval of length $\hf$ and we have a uniform bound for
 $\expec \nu(J_1)/\nu(J_2) \leq \|\nu(J_1)/\nu(J_2)\|_{q}$, $q < 2/\beta^2$ 
 from Lemma \ref{le:delt1} (or more directly from (\ref{lisa7})) and for the finitely many
 ones not fitting to such interval we use again (\ref{lisa7}).
Hence  there is also a uniform bound for  $\expec K_\nu (I)$ and
one obtains

\medskip

$
\hbox{}\qquad\qquad\displaystyle\expec \int_{[0,1]\times [0,2]} K_\nu  \leq \sum_{I\subset {D}([0,1])}|C_I|\;\expec {K}_\nu(I)\leq
C\sum_I|C_I|<\infty .
$ \hfill \halmos
\end{proof}
\section{Conclusion of the proof}
\label{se:conclusion}

In this final section we give a precise formulation to  our main result as a  Theorem and
prove it using the work done in the previous sections. In order to
make the setup clear, let us recall that our random circle homeomorphism
 was defined in Section \ref{se:circlehomeo} via formulae
 (\ref{randomhomeo1}) and (\ref{randomhomeo2}).
  Its extension  to the unit disc is constructed by the
method described in Section \ref{subse:extension}, and formula (\ref{homeo51}) in particular. 

The welding method described in Section 2  requires  estimates for  the Lehto integral of the distortion function in $\D$.  Theorem \ref{alaReed}  reduces these bounds to the boundary  function, and here the crucial
estimates are  provided by 
 our Theorem \ref{mainestimate} in Section \ref{se:probability}.

\thm{th:4.1} Let $\phi:\torus \to \torus$ be the  random circle homeomorphism from Definition \ref{def:randomhomeo}, and let $\Psi:\D \to \D$ be its extension as in (\ref{BA}) - (\ref{homeo51}).
Let $\mu = \mu_\Psi := \partial_{\bar z} \Psi / \partial_z \Psi$ be the complex dilatation of the
extension on $\D$, and set $\mu=0$
outside $\D$. 

Then almost surely there exists a (random) homeomorphic
$W^{1,1}_{loc}$-solution $f:\complex\to\complex$ to the Beltrami
equation
\beqla{eq:aux}
\dzb f =\mu\dz f \qquad \mbox{a.e.  in } \C,
\eeq
that satisfies the normalization $f(z)=z+ o(1)$ as $z\to\infty$.
Moreover, there exist $\alpha
>0$ such that the restriction $f:\torus\to\complex$ is a.s.
$\alpha$-H\"older continuous.
\ethm
\begin{proof} 
 We sketch the proof along the lines of \cite[Thm 20.9.4]{AsIwMa}, to which presentation we refer for  further details and background.
 
 \medskip
 

 For 
any integer $n\geq 1$
choose $N_n=[\rho^{-(1+\hf b)n}]\in\nanu$  where $b$ is as in Theorem \ref{mainestimate}. Denote
$$\zeta_{n,k}:=
\exp(2\pi ik/N_n)\;\;\mbox{for}\;\; k={1,\ldots ,N_n} .
$$
 Write also
$G_{n}:=\{ \zeta_{n,1},\ldots ,\zeta_{n,N_n}\}.$ Thus the distance on $\torus$
to the set $G_n$ is bounded by $2\pi /N_n\sim \rho^{(1+\hf b)n}$, up to
a constant.

For a given $n\geq 1$ and $k\in {1,\ldots N_n}$ let us denote by $A_{n,k}$ the event
$$
A_{n,k} = \{ \omega:  L_{{K}_\nu}
(k/N_n,\rho^n,2\rho)< n\delta \} ,
$$
and set $A_n=\bigcup_{k=1}^{N_n}A_{n,k}.$ Note that here we consider Lehto integrals in the half plane. 
Theorem \ref{mainestimate}(i) combined with stationarity yields that
\beq
\sum_{n=1}^\infty \prob (A_n)\leq \sum_{n=1}^\infty\sum_{k=1}^{N_n}
\prob (A_{n,k})\leq\sum_{n=1}^\infty N_n c(\delta )\rho^{(1+b)n}
\leq c(\delta)\sum_{n=1}^\infty \rho^{{b\over 2}n}<\infty .\nonumber
\eeq
Borell-Cantelli lemma yields that almost every $\omega$ belongs
to the complement of the event $\bigcup_{n>n_0(\omega )}A_n.$

Also, we obtain by Lemma
\ref{le:comparision} that
$$
{K}_\tau \leq E^2{K}_\nu,
$$
where almost surely $E<\infty.$
From Theorem 2.6  and \refeq{Koot}  we see that  $K(z,F)$,  the distortion of the extension of $h$, is bounded  by  a constant times $ K_\tau (z)$.  Hence
Lemma \ref{KL1} implies that almost surely
$$
 \int_{[0,1]\times [0,2]}K(z,F) \leq  C_0 \int_{[0,1]\times [0,2]} {K}_\tau \leq C_0 E^2 \int_{[0,1]\times [0,2]} {K}_\nu <\infty .
$$

We may thus  forget the probabilistic
setup by fixing an event $\omega_0\in\Omega$ so that we are in the following situaton: We are given the complex dilatation $\mu$ on $\D$, so that the distortion $ K = (1+|\mu|)/(1-|\mu|)$  satisfies pointwise 
$$
K( e^{2 \pi i z}) \leq C_0 {K}_\tau(z)\leq C_0 E(\omega_0)^2 {K}_\nu(z), \quad z \in \IH.
$$
Further, from the definition in (\ref{homeo51}) we have $K \equiv 1$ for $|z| \leq e^{-4\pi }$. We also have $K_\nu \in
L^1 \cap L^\infty_{loc}$ on the square $[0,1]\times [0,2]$, and     for each
$n\geq n_0$ and $k\in {1,\ldots ,N_n}$ it holds that
\beqla{eq:4.300}
  L_{{K}_\tau}(k/N_n,\rho^n,2\rho) \geq 
 (E(\omega_0))^{-2} L_{{K}_\nu}(k/N_n,\rho^n,2\rho)
\geq n\delta  (E(\omega_0))^{-2}=:\nonumber n\delta' . 
\eeq
\vskip10pt

 We next proceed as in
the standard proof of Lehto's theorem by  approximating $\mu$
by e.g. the sequence $\mu_\ell:=\frac{\ell}{\ell+1}\, \mu,$ $\ell \in \nanu$. Letting
$f_\ell$ denote the corresponding normalized solution of the
Beltrami equation with coefficient $\mu_\ell$, i.e. with the asymptotics  $f_\ell(z)=z+ o(1)$ as
$z\to\infty$, then every $f_\ell$ is a quasiconformal homeomorphism of $\C$.

To show that  \refeq{eq:aux} has a homeomorphic $W^{1,1}$-solution, we need to control the approximations $f_\ell$. For this  we first apply \cite[Lemma 20.2.3]{AsIwMa}, which tells that the inverse maps $g_\ell = f_\ell^{-1}$ have the following modulus of continuity,
$$|g_\ell(z) - g_\ell(w) | \leq 16\pi^2\,  \frac{|z|^2 + |w|^2 + \int_\D \frac{1+|\mu_\ell(\zeta)|}{1-|\mu_\ell(\zeta)|}\, d\zeta}{\log\bigl( e + \frac{1}{|z-w|} \bigr)}, \quad z,w \in \C.
$$
Here the integrals are uniformly bounded as 
$$ \frac{1+|\mu_\ell(\zeta)|}{1-|\mu_\ell(\zeta)|}  \leq  K( \zeta) \leq C_0  {K}_\tau(z), \quad \quad  \zeta = e^{2\pi i z},
$$
and $K_\tau \in
L^1[0,1]\times [0,2]$. Thus the inverse maps $g_\ell = f_\ell^{-1}$ form an equicontinuous family. 

In order to check
the equicontinuity of the family $(f_\ell)_{\ell \geq 1}$ itself we
first consider a point $z\in \D$. Writing $2a=1-|z|$, observe that
$K$ is bounded in $B(z,a)$ and as $K_\ell:= K(\cdot, f_\ell)
\leq K$ we have for any $\ell \geq 1$ and $u\in (0,a/2)$ \beq
L_{K_\ell}(z,u,1)&\geq& L_K(z,u,a)\geq (\| K\|_{L^\infty
(B(z,a))})^{-1}\log
(a/u)\nonumber\\
&&\to \infty\quad \mbox{as}\;\; u\to 0.\nonumber \eeq Moreover, by
Koebe's theorem or \cite[Cor. 2.10.2]{AsIwMa} we obtain
\beqla{eq:4.20}
f(2\D)\subset 5\D .
\eeq
Thus $\diam
(f_\ell (B(z,1)))\leq 5$, which may
be combined with Lemma \ref{le:4.1} to obtain
$$
\diam (f_\ell (B(z,u)))\to 0\quad \mbox{as $u\to 0$,\; \;uniformly in}\;\;\ell .
$$
This proves the equicontinity at interior points $z\in \D$. Equicontinuity at exterior points follows e.g.  form Koebe's theorem.

In order to next consider the uniform behaviour on $\torus$, note that it suffices to prove local equicontinuity on points of $[0,1]$ for the  family
$$F_\ell(z) = f_\ell(e^{2 \pi i  z}), \quad \ell \in \nanu.
$$
 We first
estimate the diameter of the image  $F_\ell\bigl(B(k/N_n, \rho^{n})\bigr),$
assuming that $n\geq n_0$. Applying  the fact $\diam F_\ell\bigl(B(k/N_n, 2\rho)\bigr) \leq  \diam (f_\ell
(B(\zeta_{n,k},1)))\leq 5$ and using this together with Lemma \ref{le:4.1} 
we obtain
 \beqla{eq:4.40}
 \diam (F_\ell
(B(k/N_n,\rho^{n}))) &\leq& \diam (F_\ell
(B(k/N_n, 2\rho)))16\exp(-2\pi^2n\delta')\\
&\leq& \nonumber 80 e^{-nc'}.
\eeq
From these estimates we get the required equicontinuity. 
Namely, working now on the circle $\torus$, since  the set $G_n$ is evenly spread on $\torus$,
the balls $B(\zeta_{n,k},\rho^{n+1})$ cover a $\rho^{n+2}$-neighbourhood
of $\torus$ in such a way that any two points that are in this
neigbourhood, with distance  not exceeding $\rho^{n+2}$, lie in the
same ball. Since this holds for every $n\geq n_0$ we infer from
\refeq{eq:4.40} that there is $\varepsilon_0>0$ and $\alpha >0$ so
that, uniformly in $\ell$ 
\begin{equation}\label{eq:4.50} 
|f_\ell (z)-f_\ell(w)|\leq
C|z-w|^\alpha\quad\mbox{if }\;\;|z|=1,\;\; \mbox{and}\;\; \left\{
\begin{array}{l}1-\varepsilon_0
\leq |w|\leq 1+\varepsilon_0\\
|z-w|\leq\varepsilon_0.
\end{array} \right. 
\end{equation}
 One may actually take $\alpha =c'/\log (1/\rho ) .$
This clearly yields equicontinuity at the points of $\torus$, and
hence on $\Chat$. We may now pass to
a limit and one obtains $W^{1,1}$-homeomorphic solution
$f(z)=\lim_{\ell\to\infty} f_\ell(z)$ to the Beltrami equation as in
\cite[p. 585]{AsIwMa}.

At the same time the estimate \refeq{eq:4.50} shows that
$f:\torus\to\complex$ is H\"older continuous. Since $f$ is analytic outside the disk, with $f(z) = z + o(1)$ at  infinity, in fact it follows that $f$ is  H\"older continuous on $\C \setminus \D$.  \halmos
\end{proof}



Collecting the  results established we now arrive at  the main theorem of this paper. 

\thm{main result} Let $\phi = \phi_\omega$  be the  random  circle homeomorphism, with derivative the exponentiated GFF, as defined  in  (\ref{randomhomeo1}) and (\ref{randomhomeo2}). 
 
 Then for $\beta^2< 2$ and almost surely in $\omega$, the mapping $\, \phi \, $ admits a  conformal welding. That is,  there  are a random Jordan curve  
 \beqla{eq:finally}
 \gamma = \gamma_{\omega,\beta}
 \eeq
  and conformal mappings $f_\pm$ onto the complementary domains of $\gamma$, such that $\phi = f_+^{-1} \circ f_-$ on 
$\torus$.

Moreover, almost surely in $\omega$,  the Jordan curve $\,\gamma\,$  in (\ref{eq:finally}) is unique, up to composing with a M\"obius transformation $\Gamma = \Gamma_\omega$ of the Riemann sphere. 
\ethm
\begin{proof}  We argue as in Section \ref{se:welding}. Using the complex dilatation of the extension $\Psi$ from Theorem \ref{th:4.1}, we find a homeomorphic solution $f$ to the auxiliary equation (\ref{eq:aux}). This is conformal outside the disk, so we set $f_- = f |_{ \C \setminus \D}$. Inside the disk $K(z,f)$ is locally bounded, so the uniqueness of the Beltrami equation gives $f(z) = f_+ \circ \Psi (z)$, $z \in \D$, where $f_+$ is a conformal homeomorphism on $\D$. Since the boundary $\partial f_+(\D) = \partial f_-(\C \setminus \D) =  f(\torus) = \gamma$ is a Jordan curve, $f_{\pm}$ extend to $\torus$ where we have
$$    \phi= (f_+)^{-1} \circ f_- .
$$
Finally, according to the proof of Theorem \ref{th:4.1} $f_-$ is H\"older continuous in $\C \setminus \D$, thus the uniqueness of the welding curve follows from the Jones-Smirnov Theorem \ref{homeo41}.
 \halmos
\end{proof}

\medskip

\section{Appendix: Negative Moments}
\label{se:Appendix 1}

Here we prove the finiteness of all negative moments
for the measure $\eta$, that was defined in the proof
of Theorem \ref{th:fact1}.

\begin{proposition} \label{lisa5}
Suppose $\beta^2 < 2$. Then 
$$\expec \bigl( \eta(I)^{-q} \bigr) \leq C < \infty, \quad \quad 0 < q < \infty,
$$
for a constant $C = C(q, |I|)$ depending only on the exponent $q$ and the length $|I|$.
\end{proposition}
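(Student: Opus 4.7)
The plan is to first reduce the problem to a tail estimate on $[0,1]$. Since $\expec\eta(I)^{-q}=q\int_0^\infty t^{-q-1}\prob(\eta(I)<t)\,dt$, it suffices to prove that $F(t):=\prob(\eta(I)<t)$ satisfies $F(t)=O(t^N)$ as $t\to 0^+$ for every $N\in\nanu$. By translation invariance of $\eta$, by monotonicity in $I$, and by the scaling relation \refeq{eq:scaling2}, we may reduce to the case $I=[0,1]$ and write $W:=\eta([0,1])$.

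The heart of the argument is a super-additive decomposition of $W$ into independent smaller copies, following Molchan \cite{Mo}. Fix an integer $k\geq 2$, set $\lambda=1/(2k-1)$, partition $[0,1]$ into $2k-1$ consecutive intervals of length $\lambda$, and let $J_1,\dots,J_k$ be the odd-numbered ones, so $|J_i|=\lambda$ and $\dist(J_i,J_j)\geq \lambda$ whenever $i\neq j$. Split the white noise according to $U=U^c+U^f$, where $U^c$ comes from $U\cap\{y>\lambda\}$ and $U^f$ from $U\cap\{y<\lambda\}$. For $x\in J_i$ the region $(U+x)\cap\{y<\lambda\}$ has horizontal extent less than $\lambda$ and therefore projects inside the $(\lambda/2)$-neighbourhood of $J_i$; these neighbourhoods are pairwise disjoint, so conditionally on $U^c$ the random variables $\eta(J_1),\dots,\eta(J_k)$ are independent. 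Writing
\begin{equation*}
\eta(J_i)\;\geq\; A_i\,\widetilde\eta_i,\qquad A_i:=\lambda\inf_{x\in J_i}\exp\!\bigl(\beta U^c(x)-\tfrac{1}{2}\gamma^c\bigr),\quad \widetilde\eta_i:=\lambda^{-1}\!\int_{J_i}\exp\!\bigl(\beta U^f(x)-\tfrac{1}{2}\gamma^f\bigr)dx,
\end{equation*}
the scaling law \refeq{eq:scaling2}, together with the fact that $U^c$ is essentially constant on each $J_i$ (since its variance contribution from $\{|x-x'|<\lambda\}$ saturates), identifies the conditional law $\widetilde\eta_i\,|\,U^c$ as $e^{\beta\widetilde G_i+(\beta^2/2)\log\lambda}\,W^{(i)}$, where $W^{(i)}$ are i.i.d.\ copies of $W$ and $\widetilde G_i\sim N(0,\log(1/\lambda))$ are independent Gaussians, independent of $U^c$ and of the $W^{(i)}$. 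Consequently $W\geq\sum_{i=1}^k B_i\,W^{(i)}$ with $B_i:=A_i\,e^{\beta\widetilde G_i+(\beta^2/2)\log\lambda}$, yielding the functional inequality
\begin{equation*}
F(t)\;\leq\;\expec\prod_{i=1}^k F(t/B_i). \qquad (\ast)
\end{equation*}

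The inequality $(\ast)$ bootstraps to arbitrary polynomial decay. Each weight $B_i$ has all finite negative moments: the lognormal $e^{\beta\widetilde G_i}$ trivially does, and $\inf_{J_i}e^{\beta U^c}$ does by Borell--TIS \refeq{eq:Borel-TIS} applied to the continuous Gaussian field $U^c$ on the compact $J_i$. If $F(t)\leq C_0 t^{\alpha_0}$ for some $\alpha_0>0$, then substituting $\min(C_0(t/B_i)^{\alpha_0},1)$ into $(\ast)$ and splitting according to whether all $B_i\geq t^{1/2}$ gives $F(t)\leq C\,t^{k\alpha_0}+C_M\,t^{M/2}$ for any $M>0$; taking $M>2k\alpha_0$ improves the exponent from $\alpha_0$ to $k\alpha_0$, and iteration produces $F(t)=O(t^N)$ for every $N$. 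To launch the induction we need an initial polynomial bound. This follows from $\expec W=1$ and Theorem~\ref{th:fact1}(ii) via a Paley--Zygmund step (giving $F(t)\leq 1-c$ for $t\leq t_0$), followed by one application of $(\ast)$ with $k=k(t)\asymp\log(1/t)$: the main term $(1-c)^{k(t)}\leq t$ while the error $k\,\prob(B_1<Ct)$ is $O(t^{M/2})$ for any $M$ (since $\expec B_1^{-M}$ grows only polynomially in $k$, and $k(t)$ is logarithmic in $1/t$). The principal obstacle, as I see it, is verifying that the joint negative moments $\expec\prod_i B_i^{-\alpha}$ remain controllable in the bootstrap despite the dependence of the $A_i$'s through the shared coarse field $U^c$; this is handled by observing that the independent lognormal factors $\widetilde G_i$ supply sufficient decoupling and that the Borell--TIS estimate is uniform in the translate $J_i$.
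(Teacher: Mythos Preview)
Your overall plan --- a Molchan-type super-additive decomposition $W\geq\sum_i B_iW^{(i)}$ with i.i.d.\ copies $W^{(i)}$ of $W$, a resulting functional inequality for the tail, and a bootstrap --- is the right one and is the same family of argument as the paper's.  There is, however, a genuine gap in the identification of laws.  Two points:

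\emph{(a) The extra Gaussian $\widetilde G_i$ is double-counted.}  The lognormal factor in the scaling law \refeq{eq:scaling} is precisely the coarse-field contribution you have already put into $A_i$ (note $U^c(x_i)\sim N(0,\log(1/\lambda))$).  Your $\widetilde\eta_i$ is built from $U^f$ alone, so it is independent of $U^c$; its law cannot contain another independent $N(0,\log(1/\lambda))$ factor coming from the same scaling.  This is harmless if you simply delete $\widetilde G_i$, but it signals a more serious issue:

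\emph{(b) $\widetilde\eta_i$ is not distributed as $W$.}  A short computation shows that the rescaled fine field $u\mapsto U^f(x_i+\lambda u)$ on $[0,1]$ has covariance $\log(1/|u-u'|)+|u-u'|-1$, not the covariance $\log(1/\min(|u-u'|,1))$ of $U$.  Hence $\widetilde\eta_i\not\sim W$, and $(\ast)$ becomes $F(t)\leq\expec\prod_i\widetilde F(t/B_i)$ with $\widetilde F\neq F$; the bootstrap does not close.  One can try to repair this by Kahane-type comparison, writing $U\stackrel{d}{=}\hat U^f+\xi$ with $\xi$ independent of $\hat U^f$, but then the natural bound $\widetilde\eta_i\geq W^{(i)}/G_{\xi}$ has $G_\xi$ \emph{not} independent of $W^{(i)}$, and the product structure in $(\ast)$ is lost.

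The paper avoids both problems by a clean trick: it works not with $\eta([0,1])$ but with $M:=\eta_{1/2}([0,1])$, built from the \emph{height-truncated} cone $U\cap\{y\leq\tfrac12\}$.  Below height $\tfrac12$ the cone is exactly self-similar under dilations, so $M_1:=\eta_{1/8}([0,\tfrac14])$ and $M_2:=\eta_{1/8}([\tfrac34,1])$ are \emph{exactly} distributed as $M$, exactly independent, and $M\geq B(M_1+M_2)$ with $B$ independent having all moments.  From there the paper's execution also differs from yours: for the initial small exponent it iterates the Laplace-transform inequality $\Psi_M(s^2)\leq c/s+\Psi_M(s)^2$ rather than using a variable-$k$ scheme, and for the bootstrap it uses the one-line AM--GM step
\[
\expec M^{-2q}\leq \expec\bigl(B(M_1+M_2)\bigr)^{-2q}\leq c\,\expec(M_1M_2)^{-q}=c\,(\expec M^{-q})^2,
\]
which doubles the exponent without any tail splitting.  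If you want to salvage your route, the simplest fix is to adopt the same truncation so that your $\widetilde\eta_i$ become genuine i.i.d.\ copies.
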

\begin{proof} Fix $t>0$. Define for $\varepsilon >0$ the set $U_{\varepsilon,t}$ by setting 
$
U_{\varepsilon,t}:= U\bigcap \{ \varepsilon<y \leq t\} .
$
As in \refeq{eq:limitmeasure3} one deduces the existence of the limit measure
\beqla{eq:limitmeasure4} 
\eta_t (dx):=   \lim_{\varepsilon\to 0^+} \exp \bigl(\beta U_{\varepsilon,t}
 (x)-(\beta^2/2){\rm Cov}\, (U_{\varepsilon,t})\bigr)dx. \quad
                      \eeq
Denote $M:=\eta_{1/2}([0,1])$, $M_1:=\eta_{1/8}([0,1/4])$ and
$M_2:=\eta_{1/8}([3/4,1]).$ By scaling  and translation invariance  the random variables
$M_1,M_2$ and $M$ are identically distributed. Moreover, by comparing the exponents as
in the proof of Lemma \ref{le:difference}, we see that
\beqla{eq:MMM}
M\geq B(M_1+M_2),
\eeq
where $B:=\exp \bigl(\inf_{x\in[0,1]} \beta U_{1/8,1/2}(x)-(\beta^2/2){\rm Cov}\, (U_{1/8,1/2})\bigr)$ has all moments finite.
By construction, the random variables
$M_1,M_2$ and $B$ are independent.

Similarily, by comparing $\eta$ and $\eta_{1/2}$ we
see that it is enough to prove
\beqla{eq:enough}
\expec M^{-q}<\infty\qquad \mbox{for}\quad q>0.
\eeq
We first prove this for small values of $q$. For that end, 
consider for $s>0$ the Laplace transform
\beqla{eq:square}
\Psi_M(s)&:=&\expec \exp (-sM)\leq \expec (-sB(M_1+M_2))\\
&\leq& \expec \Psi_{M_1} (sB)\Psi_{M_2} (sB)=\expec \bigl(\Psi_{M}(sB)\bigr)^2.\nonumber
\eeq
Since especially $\expec B^{-1}<\infty $, we may  estimate
$\prob (B<1/s)\leq c/s.$ By substituting $s^2$ in place of $s$
in \refeq{eq:square} and applying this inequality we obtain
\beqla{eq:functionla}
\Psi_M(s^2)\leq c/s+ \Psi^2_M(s),
\eeq
  where one may assume that $c\geq 2.$

Denote $f(s):=(c/s^{1/2} +\Psi_M(s)).$ Then \refeq{eq:functionla} yields
\beqla{eq:functionla2}
f(s^2)=c/s+\Psi_M(s^2)\leq f^2(s).
\eeq
Since $\Psi_M(s)\to 0$ as $s\to\infty$ (while $\prob(M=0)=0$),
we may choose $s_0>0$ with $\Psi_M(s_0)\leq 1/2,$ whence
\refeq{eq:functionla2} iterates to $f(s_0^{2^k})\leq 2^{-2^k}$
for $k\geq 1.$ Together with monotonicity of $f$ this yields
$\delta >0$ such that  $f(s)\leq cs^{-\delta}$ for $s>0,$
especially $\Psi_M (s)\leq cs^{-\delta}$.

We obtain that
$$
\expec M^{-\delta/2}= c\int_0^\infty \expec e^{-sM}s^{\delta/2 -1}\, ds <\infty.
$$
In order to cover all values of $q$ in \refeq{eq:enough}
we employ a simple bootstrapping argument.
Assume that $\expec M^{-q}<\infty$ for some $q>0.$
By applying the inequality between the arithmetic and
geometric mean, the independence of $B, M_1$ and $M_2$, and the fact that $B$ has all negative moments finite, we may estimate
\beqla{eq:last}
\expec M^{-2q}&\leq &\expec (B(M_1+M_2))^{-2q}
\leq c\expec(M_1M_2)^{-q}= c(\expec (M)^{-q})^2 <\infty .
\eeq
By induction, this finishes the proof. \halmos
\end{proof}


\begin{thebibliography}{11}

\bibitem{AiMaTha} H. Airault, P. Malliavin and A. Thalmaier: {\it Canonical Brownian motion on the space of univalent functions and resolution of Beltrami equations by a continuity method along stochastic flows.}  J. Math. Pures Appl.   83  (2004), 955--1018. 

\bibitem{Adler} R.J. Adlera and  J.E. Taylor: {\it Random fields and geometry}.
 Springer, New York, 2007.

\bibitem{AnVaVu} G. Anderson, M. Vamanamurthy and M. Vuorinen: {\it
Conformal invariants, inequalities, and quasiconformal maps.}
Canadian Mathematical Society Series of Monographs and Advanced
Texts.  John Wiley \& Sons, Inc., 1997.

\bibitem{AsIwMa} K. Astala, T. Iwaniec and G. Martin: {\it Elliptic Partial Differential Equations and Quasiconformal
Mappings in the Plane}.  Princeton Mathematical Series 47, Princeton University Press, 2009.

\bibitem{BaMu} E. Bacry and J. F. Muzy: {\it Log-infinitely divisible multifractal
processes,} Comm. Math. Phys. 236 (2003), 449--475.

\bibitem{BA} A. Beurling and L.V. Ahlfors: {\it The boundary correspondence under quasiconformal mappings.} Acta Math. 96 (1956), 125--142.

\bibitem{BeSchra} I. Benjamini and O. Schramm: {\it KPZ in one dimensional
random geometry of multiplicative cascades,}  Comm. Math. Phys. 289 (2009),  653--662.

\bibitem{Ca} J. Cardy: {\it Scaling and renormalization in statistical physics}, 
Cambridge Lecture Notes in Physics 5, Cambridge University Press, Cambridge, 1996.

\bibitem{DuShe} B. Duplantier and S. Sheffield:
{\it Duality and the Knizhnik-Polyakov-Zamolodchikov relation in Liouville quantum gravity,}  Phys. Rev. Lett. 102 (2009), 150603, 4 pp.

\bibitem{DuShe} B. Duplantier and S. Sheffield:
{\it Liouville Quantum Gravity and KPZ,} ArXiv [math.PR] 0808.1560. (2008)

\bibitem{leD} D. Carpentier, P. Le Doussal: Glass transition of a particle in a random potential, front selection in non linear RG
and entropic phenomena in Liouville and SinhGordon models, Phys. Rev. E 63, 026110 (2001)

\bibitem{bf} Y.V. Fyodorov, J.P. Bouchaud: Freezing and extreme value statistics in a Random
Energy Model with logarithmically correlated
potential  J. Phys. A: Math. Theor. 41 372001 (2008)

\bibitem{bf1}	 P. Le Doussal,  Y.V. Fyodorov, A. Rosso, 	Statistical Mechanics of Logarithmic REM: Duality, Freezing and Extreme Value Statistics of $1/f$ Noises generated by Gaussian Free Fields, arXiv:0907.2359 (2009)

\bibitem{JeKe} D. Jerison and C. Kenig: {\it Hardy spaces, $A_{\infty}$, and singular integrals on chord-arc domains.}, Math. Scand. 50 (1982), 221--247. 

\bibitem{JoSmi} P. Jones and S. Smirnov: {\it Removability theorems for
Sobolev functions and quasiconformal maps}, Arkiv f\"or Matematik 38
(2000), 263--279.

\bibitem{Ka1} J.-P. Kahane: {\it Sur le chaos multiplicatif,}
Ann. Sci. Math. Qu\' ebec 9 (1985), 435--444.

\bibitem{Ka2} J.-P. Kahane: {\it Some random series of functions.}
Second edition. Cambridge Studies in Advanced Mathematics, 5.
Cambridge University Press, 1985.

\bibitem{Ka3} J.-P. Kahane: {\it Positive martingales and random measures,}
Chi. Annal. Math 8B (1987), 1--12.

\bibitem{KaPe} J.-P. Kahane and  J. Peyri\` ere: {\it Sur certaines martingales de Benoit
Mandelbrot,} Advances in Math. 22 (1976), 131--145.

\bibitem{M.Le} M. Lehtinen: {\it The dilatation of Beurling-Ahlfors extensions of quasisymmetric
functions,} Ann. Acad. Sci. Fenn. 8 (1983), 187--191.

\bibitem{Le} O. Lehto: {\it Homeomorphisms with a given dilatation},  in:  Proceedings of the Fifteenth
Scandinavian Congress (Oslo, 1968), pp. 58--73. Lecture Notes in
Mathematics  118, Springer, 1970.

\bibitem{Mandel} B. Mandelbrot: {\it Intermittent turbulence in self-similar cascades: divergence of high moments and dimension of the carrier.} Journal of Fluid Mechanics 62 (1974),  331--358

\bibitem{Mo} G. M. Molchan: {\it Scaling exponents and multifractal
dimensions for independent random cascades,} Comm. Math. Phys. 179
(1996), 681--702.

\bibitem{Oi} K. Oikawa: {\it Welding of polygons and the type of Riemann surfaces,}  
Kodai Math. Sem. Rep. 13  (1961), 37--52. 

\bibitem{Re} T. Reed: {\it On the boundary correspondence of quasiconformal mappings of domains bounded by quasicircles.} Pacific J. Math. 28 (1969), 653--661.

\bibitem{RoVa} R. Robert and V. Vargas: {\it Gaussian multiplicative chaos revisited,}
ArXiv [math.PR] 0807.1030. (2008).

\bibitem{Sa} G. Samorodnitsky: {\it Probability tails
of Gaussian extrema,} Stochastic Prosess. Appl. 38 (1991), 55--84.

\bibitem{Schra1} O. Schramm: {\it Scaling limits of loop-erased random walks
and uniform spanning trees,} Israel J. Math. 118 (2000), 221--288.

\bibitem{Schra2} O. Schramm: {\it Conformally invariant scaling limits: an overview and
a collection of problems.} In:  International Congress of
Mathematicians. Vol. I, 513--543, Eur. Math. Soc.,  2007.

\bibitem{Smi} S. Smirnov: {\it Conformal invariance in random cluster models.
I. Holomorphic fermions in the Ising model}, to appear in Annals of
Math.

\bibitem{Ta} M. Talagrand: {\it Sharper bounds for Gaussian
and empirical processes,} Ann. Prob. 22 (1994), 28--76.

\bibitem{Va} J. Vainio: {\it Conditions for the possibility of conformal sewing.}  Ann. Acad. Sci. Fenn. Ser. A I Math. Dissertationes  53 (1985), 43 pp.

\bibitem{Vu} M. Vuorinen:  {\it Conformal geometry and quasiregular
mappings.} Lecture Notes in Mathematics  1319. Springer, 1988.


\end{thebibliography}
\end{document}